\newcommand{\GJMS}{P}
\newcommand{\dilation}{\delta}
\newcommand{\Hsymbol}{S_{H}}
\newcommand{\Hpsido}{\Psi_{H}}
\newcommand{\HSobolev}{W_{H}}
\newcommand{\ovone}{\overline{1}}
	\def\MR#1{}
\title{CR Paneitz operator on non-embeddable CR manifolds}
\author{Yuya Takeuchi}
\address{Division of Mathematics \\ Institute of Pure and Applied Sciences \\ University of Tsukuba
	\\ 1- 1- 1 Tennodai, Tsukuba, Ibaraki 305-8571 Japan}
\email{ytakeuchi@math.tsukuba.ac.jp, yuya.takeuchi.math@gmail.com}
\subjclass[2020]{32V20, 58J50}
\keywords{CR Paneitz operator, embeddability}
\thanks{This work was supported by JSPS KAKENHI Grant Number JP21K13792.}
\begin{document}

\begin{abstract}
	The CR Paneitz operator is closely related to some important problems in CR geometry.
	In this paper,
	we consider this operator on a non-embeddable CR manifold.
	This operator is essentially self-adjoint
	and its spectrum is discrete except zero.
	Moreover,
	the eigenspace corresponding to each non-zero eigenvalue
	is a finite dimensional subspace of the space of smooth functions.
	Furthermore,
	we show that the CR Paneitz operator on the Rossi sphere,
	an example of non-embeddable CR manifolds,
	has infinitely many negative eigenvalues,
	which is significantly different from the embeddable case.
\end{abstract}

\maketitle

\section{Introduction}
\label{section:introduction}

The \emph{CR Paneitz operator} plays a crucial role in CR geometry of dimension theree.
For example,
this operator appears in the transformation law of the logarithmic singularity of the \Szego kernel~\cite{Hirachi1993},
which is also known as the CR $Q$-curvature~\cite{Fefferman-Hirachi2003}.
Moreover,
the non-negativity of the CR Paneitz operator is deeply connected to
global embeddability~\cites{Chanillo-Chiu-Yang2012,Takeuchi2020-Paneitz}
and the CR positive mass theorem~\cite{Cheng-Malchiodi-Yang2017},
which has an application to the CR Yamabe problem.

Let $(M, T^{1,0}M, \theta)$ be a closed pseudo-Hermitian manifold of dimension three.
We consider the CR Paneitz operator $P$ on $M$
as an unbounded operator on $L^{2}(M)$ with domain $C^{\infty}(M)$,
which is closable.
In the embeddable case,
Hsiao~\cite{Hsiao2015} has studied analytic properties of this operator;
see also \cite{Takeuchi2023-GJMS} for some improvements and generalizations to higher dimensions.
The CR Paneitz operator on an embeddable CR manifold is essentially self-adjoint and has closed range.
Moreover,
its spectrum is a discrete subset of $\bbR$ and consists only of eigenvalues.
Furthermore,
the eigenspace associated with each non-zero eigenvalue
is a finite-dimensional subspace of $C^{\infty}(M)$.

It is natural to ask what happens in the non-embeddable case.
We will first prove that $P$ is essentially self-adjoint
even in the non-embeddable case.

\begin{theorem}
\label{thm:ess-self-adj-of-CR-Paneitz}
	The CR Paneitz operator $P$ is essentially self-adjoint;
	equivalently,
	the maximal closed extension of $\GJMS$ is self-adjoint.
\end{theorem}

We use the same letter $\GJMS$
for the maximal closed extension of the CR Paneitz operator
by abuse of notation.
Let $E$ be the resolution of the identity for $\GJMS$,
and set
\begin{equation}
	\pi_{\lambda} \coloneqq E(\clcl{- \lambda}{\lambda})
	\colon L^{2}(M) \to \Dom \GJMS
\end{equation}
for $\lambda \geq 0$.
Note that $\pi_{\lambda}$ is an orthogonal projection of $L^{2}(M)$.
We will show that if $\lambda > 0$,
then $\GJMS \pi_{\lambda}$ is a smoothing operator (\cref{thm:smoothing-property-of-projection}).
As an application,
we study the spectrum $\Spec \GJMS$ of $\GJMS$.
As opposed to the embeddable case,
we can only show that $\Spec \GJMS$ is discrete \emph{except} zero.

\begin{theorem}
\label{thm:spectrum-of-CR-Paneitz}
	The set $\Spec \GJMS \setminus \{0\}$ is a discrete subset of $\bbR \setminus \{0\}$
	and consists only of eigenvalues of finite multiplicity.
	Moreover,
	any eigenfunction of $\GJMS$ corresponding to each non-zero eigenvalue is smooth.
\end{theorem}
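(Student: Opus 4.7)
The plan is to apply the analytic Fredholm theorem to the resolvent family $\lambda \mapsto (\bmP - \lambda)^{-1}$, which is holomorphic on $\bbC \setminus \bbR$ by \cref{thm:ess-self-adj-of-CR-Paneitz}. The goal is to extend it meromorphically to $\bbC \setminus \{0\}$ and identify its poles as the non-zero eigenvalues of finite multiplicity. The engine will be a holomorphic family of Heisenberg pseudodifferential parametrices for $P - \lambda$: for each $\lambda_{0} \in \bbR^{\times}$, I would construct on a complex neighborhood $U$ of $\lambda_{0}$ a holomorphic family of bounded operators $G_{\lambda}$ on $L^{2}(M)$ together with a holomorphic family of smoothing (hence compact) operators $K_{\lambda}$ such that $(\bmP - \lambda) G_{\lambda} = I - K_{\lambda}$, and a corresponding left parametrix of the same form.

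The parametrix would be built inside the Heisenberg calculus, in which $P$ is of Heisenberg order four. Although $P$ is not Heisenberg elliptic — its operator-valued principal Heisenberg symbol on the characteristic conormal line has a non-trivial kernel, which is the symbolic origin of the potentially large $\ker \bmP$ — the shift $P - \lambda$ with $\lambda \neq 0$ does possess an invertible operator-valued symbol off a discrete set of complex numbers: on the kernel of the symbol of $P$ the symbol of $P - \lambda$ acts as multiplication by $-\lambda$, while the non-zero eigenvalues of the symbol of $P$ form a discrete set accumulating only at infinity. A standard symbol-inversion argument then produces $G_{\lambda}$ and $K_{\lambda}$, the holomorphic dependence on $\lambda$ following from the rational dependence of the inverted symbol.

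The rest is routine. For $\lambda \in U \cap (\bbC \setminus \bbR)$ the operator $\bmP - \lambda$ is invertible; combined with $I - K_{\lambda}$ being a Fredholm operator of index zero, this forces $I - K_{\lambda}$ to be invertible at such $\lambda$. By the analytic Fredholm theorem $(I - K_{\lambda})^{-1}$ extends meromorphically to $U$ with finite-rank residues, and hence so does $(\bmP - \lambda)^{-1} = G_{\lambda}(I - K_{\lambda})^{-1}$. Self-adjointness then confines the poles to $\bbR^{\times}$, yielding discreteness of $\Spec \bmP \setminus \{0\}$ and finite-dimensionality of each non-zero eigenspace. For the smoothness claim, if $\bmP u = \lambda u$ with $\lambda \in \bbR^{\times}$, applying a left parametrix to $(\bmP - \lambda) u = 0$ gives $u = K'_{\lambda} u$, and the smoothing property of $K'_{\lambda}$ forces $u \in C^{\infty}(M)$.

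The hard part is the parametrix construction, specifically an explicit enough understanding of the operator-valued principal Heisenberg symbol of $P$ in dimension three to justify the symbolic inversion of $P - \lambda$ off a discrete set in $\lambda$. Concretely, I would exploit the known factorization of the CR Paneitz operator through $\bar\partial_{b}$ in dimension three to reduce the symbol analysis to an explicit computation on the Heisenberg group model, and then extend the resulting parametrix meromorphically in $\lambda$ by the standard Neumann series / symbol-inversion procedure in the Heisenberg calculus.
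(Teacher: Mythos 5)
Your outline matches the paper's overall strategy — construct a $\lambda$-dependent Heisenberg parametrix for $P - \lambda$, apply the analytic Fredholm theorem, and use a bootstrap argument for smoothness and finite-dimensionality. But two of the steps you call routine are exactly where the substance lies, and as stated they have genuine gaps.

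First, the proposed symbol inversion does not work as described. Because $\lambda$ is a zeroth-order perturbation of the fourth-order operator $P$, the Heisenberg \emph{principal} symbol of $P - \lambda$ is identical to that of $P$; the $-\lambda$ term is invisible at the symbol level, so one cannot invert the symbol of $P - \lambda$ by the standard Neumann-series procedure in the Heisenberg calculus. The non-ellipticity of $P$ must be handled by different means. The paper uses the Beals--Greiner approximate \Szego projection $S$ and approximate partial inverse $N$ of $\Box_{b}$ (\cref{thm:approximate-Szego-projection}), sets $\Pi_{0} = S + \ovS$, $G_{0} = N\ovN$, and defines the explicit family $\calR(\lambda) = -\lambda^{-1}\Pi_{0} + G_{0}$. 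The $-\lambda^{-1}\Pi_0$ term plays the role you assign to ``inverting $-\lambda$ on the kernel of the symbol,'' but it is realized at the operator rather than symbol level, and verifying that $P G_{0} \equiv I - \Pi_{0}$ modulo lower order requires the commutator lemmas $\comm{S}{\overline{\Box}_b} \in \Hpsido^{1}(M)$ and $S\ovS \in \Hpsido^{-1}(M)$ (\cref{lem:commutator-of-Szego-and-Laplacian,lem:composition-of-Szego-projection}), which do not come for free.

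Second, the claim that for non-real $\lambda$ the invertibility of $\bmP - \lambda$, together with $I - K_\lambda$ being Fredholm of index zero, ``forces $I - K_\lambda$ to be invertible'' is not justified. From $(P - \lambda)G_\lambda = I - K_\lambda$ and the injectivity of $\bmP - \lambda$ you can only conclude $\Ker(I - K_\lambda) = \Ker G_\lambda$; there is no reason the parametrix $G_\lambda$ must be injective. Without invertibility at at least one point of the connected domain, the analytic Fredholm theorem allows the degenerate alternative in which $(I - K_\lambda)^{-1}$ exists nowhere. The paper closes this gap with an explicit finite-rank correction: it picks orthonormal bases $(u_i)$, $(v_i)$ of $\Ker(I - \calK(\sqrt{-1}))$ and its orthogonal complement, uses the invertibility of $\bmP - \sqrt{-1}I$ to produce smooth $w_i$ with $(\bmP - \sqrt{-1}I)w_i = v_i$, and modifies $\calR$ and $\calK$ by rank-$m$ terms so that $I - \widehat{\calK}(\sqrt{-1})$ becomes injective and hence invertible. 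This step is essential and is absent from your argument.
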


We will also show that $\pi_{\lambda}$ is a Heisenberg pseudodifferential operator
(\cref{thm:spectral-projection-is-Hpsido}).
In particular,
the singular support of its Schwartz kernel
is contained in the diagonal $\Delta_{M} \subset M \times M$.
Our proof is inspired by that of \cite{Hsiao-Marinescu2017}.
Remark that $\pi_{0}$ is \emph{not} necessarily a Heisenberg pseudodifferential operator;
see \cref{rem:ortho-proj-to-Ker-is-not-Hpsido}.

We also study analytic properties of the CR Paneitz operator on the Rossi sphere,
a homogeneous non-embeddable strictly pseudoconvex CR manifold;
see \cite{Chen-Shaw2001} for example.
The unit sphere
\begin{equation}
	S^{3}
	\coloneqq \Set{(z, w) \in \bbC^{2} | \abs{z}^{2} + \abs{w}^{2} = 1}
\end{equation}
has the canonical CR structure $T^{1, 0} S^{3}$.
This CR structure is spanned by
\begin{equation}
	Z_{1}
	\coloneqq \ovw \frac{\del}{\del z} - \ovz \frac{\del}{\del w}.
\end{equation}
A canonical contact form $\theta$ on $S^{3}$ is given by
\begin{equation}
	\theta
	\coloneqq \frac{\sqrt{- 1}}{2} (z d \ovz + w d \ovw
		- \ovz d z - \ovw d w)|_{S^{3}}.
\end{equation}
For a real number $0 < \abs{t} < 1$,
the \emph{Rossi sphere} $(S^{3}_{t}, T^{1, 0} S^{3}_{t})$ is defined by
\begin{equation}
	(S^{3}_{t}, T^{1, 0} S^{3}_{t})
	\coloneqq (S^{3}, \bbC (Z_{1} + t \overline{Z_{1}})).
\end{equation}
Denote by $P(t)$ the CR Paneitz operator with respect to $(S^{3}_{t}, T^{1,0}S^{3}_{t}, \theta)$.
We can apply the theory of spherical harmonics
since $P(t)$ is invariant under $U(2)$-action.
Similar to the method in \cite{Abbas-Brown-Ramasami-Zeytuncu2019},
we can prove the following

\begin{theorem}
\label{thm:infinitely-many-negative-eigenvalue}
	The CR Paneitz operator $P(t)$ on the Rossi sphere $(S^{3}_{t}, T^{1, 0} S^{3}_{t}, \theta)$
	has infinitely many negative eigenvalues counted without multiplicity.
\end{theorem}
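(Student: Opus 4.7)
The plan is to diagonalize $P(t)$ via the spherical harmonic decomposition
\begin{equation*}
L^{2}(S^{3}) = \bigoplus_{p, q \geq 0} \mathcal{H}_{p, q},
\end{equation*}
where $\mathcal{H}_{p, q}$ denotes the restriction to $S^{3}$ of the space of harmonic polynomials in $(z, w, \bar z, \bar w)$ of bidegree $(p, q)$. Each $\mathcal{H}_{p, q}$ is a $U(2)$-irreducible representation of dimension $p + q + 1$, and distinct pairs $(p, q)$ yield pairwise inequivalent representations. Because $P(t)$ commutes with the $U(2)$-action, Schur's lemma forces it to preserve each $\mathcal{H}_{p, q}$ and to act on it by a real scalar $\lambda_{p, q}(t)$. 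The theorem thus reduces to exhibiting infinitely many pairs $(p, q)$ with $\lambda_{p, q}(t) < 0$.

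The central step is to derive a closed form for $\lambda_{p, q}(t)$. Taking $Z_{1}^{(t)} \coloneqq Z_{1} + t \overline{Z_{1}}$ as a frame for $T^{1, 0} S^{3}_{t}$ and keeping $\theta$ fixed, one computes the Levi form, Tanaka--Webster connection, and pseudo-Hermitian torsion of $(S^{3}_{t}, T^{1, 0} S^{3}_{t}, \theta)$ explicitly as functions of $t$, and substitutes these into a standard expression for the CR Paneitz operator. This presents $P(t)$ as a fourth-order operator polynomial in $Z_{1}$, $\overline{Z_{1}}$, the Reeb field $T$, and $t$. Evaluating $P(t)$ on a convenient basis vector of $\mathcal{H}_{p, q}$ --- for instance the highest-weight monomial $z^{p} \bar w^{q}|_{S^{3}}$ --- and using the standard action of $Z_{1}, \overline{Z_{1}}$ as bidegree-shifting operators and of $T$ as a weight operator, yields an explicit formula for $\lambda_{p, q}(t)$ as a polynomial in $p, q$ with $t$-dependent coefficients.

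The main obstacle is managing the bookkeeping in this calculation: several terms in the Paneitz operator expansion contribute at the same order in $(p, q)$, and the $t$-dependent cancellations must be tracked carefully. Once the formula is in hand, the sign analysis should be straightforward. At $t = 0$ one recovers the standard sphere, on which $P(0) \geq 0$ and vanishes precisely on the CR- and anti-CR-holomorphic pieces $\mathcal{H}_{p, 0}$ and $\mathcal{H}_{0, q}$. The Rossi perturbation shifts these degenerate zero eigenvalues off the origin, and the explicit formula is expected to show that along at least one such boundary family --- for instance $(p, q) = (k, 0)$ as $k \to \infty$ --- the resulting shift is negative of size $O(k^{a})$ for some $a > 0$, producing the desired infinite sequence of negative eigenvalues of $P(t)$.
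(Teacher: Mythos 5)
There is a genuine gap at the very first reduction, and it is fatal to the whole strategy. Schur's lemma would force $P(t)$ to act by a scalar on each $\scrH_{p,q}(S^{3})$ only if $P(t)$ commuted with a group under which these spaces are pairwise \emph{inequivalent} irreducibles. That hypothesis fails here: the center of $U(2)$ rescales $Z_{1}$ by $e^{-2i\alpha}$ and $Z_{\ovone}$ by $e^{2i\alpha}$, so for $t \neq 0$ it does not preserve the line $\bbC(Z_{1} + t Z_{\ovone})$, and hence $P(t)$ is invariant only under $SU(2)$ (and a finite part of the center), not under all of $U(2)$. Under $SU(2)$ all the spaces $\scrH_{p,q}(S^{3})$ with the same total degree $p+q$ are equivalent $(p+q+1)$-dimensional irreducibles, so Schur's lemma only gives invariance of the block $\bigoplus_{p+q=n}\scrH_{p,q}(S^{3})$, not of each summand. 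Indeed $P(t)$ demonstrably does \emph{not} preserve the individual $\scrH_{p,q}(S^{3})$: computing the Webster data of the Rossi sphere gives
\begin{equation}
	(1 - t^{2}) \Box_{b}(t)
	= \Box_{b} - t (Z_{1})^{2} - t (Z_{\ovone})^{2} + t^{2} \overline{\Box}_{b},
	\qquad
	(1 - t^{2})^{2} \calQ(t)
	= 4 t (Z_{\ovone})^{2} - 4 t^{2} (\Box_{b} + \overline{\Box}_{b}) + 4 t^{3} (Z_{1})^{2},
\end{equation}
and $(Z_{1})^{2}, (Z_{\ovone})^{2}$ shift bidegree by $(\mp 2, \pm 2)$. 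So there are no scalars $\lambda_{p,q}(t)$; evaluating $P(t)$ on $z^{p}\ovw^{q}|_{S^{3}}$ does not produce an eigenvalue, and the sign analysis you defer to the end has nothing to act on.

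What a correct argument needs, and what the paper does, is to work inside one fixed total-degree block $p+q = 2k-1$ and restrict $P(t)$ to the $k$-dimensional invariant subspace spanned by the chain $v_{1} \in \scrH_{2k-1,0}(S^{3})$, $v_{i+1} \propto (Z_{1})^{2} v_{i}$. Using the eigenvalues $(p+1)q$ and $p(q+1)$ of $\Box_{b}$ and $\overline{\Box}_{b}$, the restriction is a $k \times k$ Hermitian pentadiagonal matrix; a computation of its leading principal minors (by elementary row operations) shows they are positive up to size $k-1$ and negative at size $k$, so Cauchy interlacing yields exactly one negative eigenvalue for every $k$. This produces negative-eigenvalue eigenfunctions in infinitely many mutually orthogonal blocks, and one still needs \cref{thm:spectrum-of-CR-Paneitz} (non-zero eigenvalues have finite multiplicity) to convert this into infinitely many negative eigenvalues counted \emph{without} multiplicity --- a step your proposal also omits. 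Your perturbative intuition that the zero modes on $\scrH_{p,0}$ are pushed negative is consistent with this picture, but it is not by itself a proof; the matrix analysis within each block is the substance of the argument.
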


This paper is organized as follows.
In \cref{section:CR-manifolds},
we recall basic facts on CR manifolds and the definition of the CR Paneitz operator.
\cref{section:Heisenberg-calculus} gives a brief exposition of the Heisenberg calculus,
which is a main tool for studying the CR Paneitz operator.
\cref{section:proof-of-main-theorems} is devoted to proofs of
\cref{thm:ess-self-adj-of-CR-Paneitz,thm:spectrum-of-CR-Paneitz} as applications of Heisenberg calculus.
In \cref{section:CR-Paneitz-operator-on-Rossi-spheres},
we study the CR Paneitz operator on the Rossi sphere via spherical harmonics.
In \cref{section:concluding-remarks},
we propose some related problems and give some observations.

\section{CR manifolds}
\label{section:CR-manifolds}

Let $M$ be a smooth three-dimensional manifold without boundary.
A \emph{CR structure} is a complex line subbundle $T^{1, 0} M$
of the complexified tangent bundle $TM \otimes \mathbb{C}$ such that
\begin{equation}
	T^{1, 0}M \cap T^{0, 1}M = 0,
\end{equation}
where $T^{0, 1} M$ is the complex conjugate of $T^{1, 0} M$ in $T M \otimes \mathbb{C}$.
Introduce an operator $\delbb \colon C^{\infty}(M) \to \Gamma((T^{0, 1} M)^{\ast})$ by
\begin{equation}
	\delbb f = (d f)|_{T^{0, 1} M}.
\end{equation}
A CR manifold $(M, T^{1, 0} M)$ is said to be \emph{embeddable}
if there exists a smooth embedding $F$ from $M$ into some $\bbC^{N}$
such that $F_{\ast} T^{1, 0} M \subset T^{1, 0} \bbC^{N}$.

A CR structure $T^{1, 0} M$ is said to be \emph{strictly pseudoconvex}
if there exists a nowhere-vanishing real one-form $\theta$ on $M$
such that
$\theta$ annihilates $T^{1, 0} M$ and
\begin{equation}
	- \sqrt{- 1} d \theta (Z, \ovZ) > 0, \qquad
	0 \neq Z \in T^{1, 0} M;
\end{equation}
we call such a one-form a \emph{contact form}.
The triple $(M, T^{1, 0} M, \theta)$ is called a \emph{pseudo-Hermitian manifold}.
Denote by $T$ the \emph{Reeb vector field} with respect to $\theta$; 
that is, the unique vector field satisfying
\begin{equation}
	\theta(T) = 1, \qquad T \contr d\theta = 0.
\end{equation}
Let $Z_{1}$ be a local frame of $T^{1, 0} M$,
and set $Z_{\ovone} = \overline{Z_{1}}$.
Then
$(T, Z_{1}, Z_{\ovone})$ gives a local frame of $T M \otimes \mathbb{C}$,
called an \emph{admissible frame}.
Its dual frame $(\theta, \theta^{1}, \theta^{\ovone})$
is called an \emph{admissible coframe}.
The two-form $d \theta$ is written as
\begin{equation}
	d \theta = \sqrt{- 1} l_{1 \ovone} \theta^{1} \wedge \theta^{\ovone},
\end{equation}
where $l_{1 \ovone}$ is a positive function.
We use $l_{1 \ovone}$ and its multiplicative inverse $l^{1 \ovone}$
to raise and lower indices.

A contact form $\theta$ induces a canonical connection $\nabla$,
called the \emph{Tanaka-Webster connection} with respect to $\theta$.
It is defined by
\begin{equation}
	\nabla T
	= 0,
	\quad
	\nabla Z_{1}
	= {\omega_{1}}^{1} Z_{1},
	\quad
	\nabla Z_{\ovone}
	= {\omega_{\ovone}}^{\ovone} Z_{\ovone}
	\quad
	\rbra*{ {\omega_{\ovone}}^{\ovone}
	= \overline{{\omega_{1}}^{1}} }
\end{equation}
with the following structure equations:
\begin{gather}
	d \theta^{1}
	= \theta^{1} \wedge {\omega_{1}}^{1}
	+ {A^{1}}_{\ovone} \theta \wedge \theta^{\ovone}, \\
	d l_{1 \ovone}
	= {\omega_{1}}^{1} l_{1 \ovone}
	+ l_{1 \ovone} {\omega_{\ovone}}^{\ovone}.
\end{gather}
The tensor $A_{1 1} = \overline{A_{\ovone \ovone}}$
is called the \emph{Tanaka-Webster torsion}.
The curvature form
${\Omega_{1}}^{1} = d {\omega_{1}}^{1}$
of the Tanaka-Webster connection satisfies
\begin{equation} \label{eq:curvature-form-of-TW-connection}
	{\Omega_{1}}^{1}
	\equiv \Scal \cdot l_{1 \ovone} \theta^{1} \wedge \theta^{\ovone}
		\qquad \text{modulo } \theta,
\end{equation}
where $\Scal$ is the \emph{Tanaka-Webster scalar curvature}.
We denote the components of a successive covariant derivative of a tensor
by subscripts preceded by a comma,
for example, $K_{1 \ovone , 1}$;
we omit the comma if the derivatives are applied to a function.
We use the index $0$ for the component $T$ or $\theta$ in our index notation.
In this notation,
the operator $\delbb$ is given by
\begin{equation}
\label{eq:tensorial-rep-of-delb}
	\delbb f
	= f_{\ovone} \theta^{\ovone}.
\end{equation}
The commutators of the second derivatives for $u \in C^{\infty}(M)$ are given by
\begin{equation}
\label{eq:commutator-of-covariant-derivatives}
	u_{1 \ovone} - u_{\ovone 1}
	= \sqrt{- 1} l_{1 \ovone} u_{0},
	\qquad
	u_{0 1} - u_{1 0}
	= A_{1 1} u^{1};
\end{equation}
see~\cite{Lee1988}*{(2.14)}.
Define the \emph{Kohn Laplacian} $\Box_{b}$ and the \emph{sub-Laplacian} $\Delta_{b}$ by
\begin{equation}
\label{eq:Kohn-Laplacian}
	\Box_{b} u
	\coloneqq - {u_{\ovone}}^{\ovone},
	\qquad
	\Delta_{b} u
	\coloneqq (\Box_{b} + \overline{\Box}_{b}) u
\end{equation}
for $u \in C^{\infty}(M)$.
It follows from \cref{eq:commutator-of-covariant-derivatives} that
\begin{equation}
\label{eq:complex-conjugate-of-Kohn-Laplacian}
	\overline{\Box}_{b}
	= \Box_{b} - \sqrt{- 1} T.
\end{equation}
It is known that $(M, T^{1, 0} M)$ is embeddable
if and only if the Kohn Laplacian $\Box_{b}$ has closed range~\cites{Boutet_de_Monvel1975,Kohn1986}.

The \emph{CR Paneitz operator} $\GJMS$ is the fourth-order differential operator given by
\begin{equation}
	\GJMS
	\coloneqq \overline{\Box}_{b} \Box_{b} + \calQ,
\end{equation}
where
\begin{equation}
	\calQ u
	\coloneqq \sqrt{- 1} (A^{\ovone \ovone} u_{\ovone})_{, \ovone}.
\end{equation}
This operator is real and formally self-adjoint;
see \cite{Gover-Graham2005}*{Proposition 5.1} for example.
Note that our $\GJMS$ is just the operator $P_{0, 0}$ in this paper.
Define an operator $\calC \colon \Gamma((T^{0, 1} M)^{\ast}) \to C^{\infty}(M)$ by
\begin{equation}
	\calC(\tau_{\ovone} \theta^{\ovone})
	\coloneqq - \overline{\Box}_{b} ({\tau_{\ovone ,}}^{\ovone})
		+ \sqrt{- 1} (A^{\ovone \ovone} \tau_{\ovone})_{, \ovone}.
\end{equation}
It follows from \cref{eq:tensorial-rep-of-delb,eq:Kohn-Laplacian} that $\GJMS = \calC \delb_{b}$.
It is known that
the CR Paneitz operator is non-negative
if $(M, T^{1, 0} M)$ is embeddable~\cite{Takeuchi2020-Paneitz}*{Theorem 1.1}.
Conversely,
$(M, T^{1, 0} M)$ is embeddable
if the CR Paneitz operator is non-negative and the Tanaka-Webster scalar curvature is positive%
~\cite{Chanillo-Chiu-Yang2012}*{Theorem 1.4(a)}.

\section{Heisenberg calculus}
\label{section:Heisenberg-calculus}

In this section,
we recall basic properties of Heisenberg pseudodifferential operators;
see~\cites{Beals-Greiner1988,Ponge2008-Book}
for a comprehensive introduction to the Heisenberg calculus.

Throughout this section,
we fix a closed pseudo-Hermitian manifold $(M, T^{1, 0} M, \theta)$ of dimension three.
Set
\begin{equation}
	\frakg M
	\coloneqq (T M / H M) \oplus H M.
\end{equation}
The Reeb vector field $T$ defines a nowhere-vanishing section $[T]$ of $T M / H M$.
For sections $X_{0}$ and $Y_{0}$ of $T M / H M$
and $X^{\prime}$ and $Y^{\prime}$ of $H M$,
the Lie bracket $\comm{X_{0} + X^{\prime}}{Y_{0} + Y^{\prime}}$ is defined by
\begin{equation}
	\comm{X_{0} + X^{\prime}}{Y_{0} + Y^{\prime}}
	\coloneqq - d \theta (X^{\prime}, Y^{\prime}) [T].
\end{equation}
This bracket makes $\frakg M$ a bundle of two-step nilpotent Lie algebras.
The dilation $\dilation_{r}$ on $\frakg M$ is defined by
\begin{equation}
	\dilation_{r} |_{T M / H M}
	\coloneqq r^{2},
	\qquad
	\dilation_{r} |_{H M}
	\coloneqq r.
\end{equation}
For $m \in \bbZ$,
the space $\Hsymbol^{m}(M)$
consists of functions in $C^{\infty}((\frakg M)^{\ast} \setminus \{0\})$
that are homogeneous of degree $m$ on each fiber.
This space has a bilinear product
\begin{equation}
	\ast \colon \Hsymbol^{m_{1}}(M) \times \Hsymbol^{m_{2}}(M)
	\to \Hsymbol^{m_{1} + m_{2}}(M).
\end{equation}

For $m \in \bbZ$,
denote by $\Hpsido^{m}(M)$
the space of \emph{Heisenberg pseudodifferential operators
$A \colon C^{\infty}(M) \to C^{\infty}(M)$ of order $m$}.
This space is closed under sum, complex conjugate, transpose, and formal adjoint%
~\cite{Ponge2008-Book}*{Proposition 3.1.23}.
In particular,
any $A \in \Hpsido^{m}(M)$ extends to a linear operator
\begin{equation}
	A \colon \scrD^{\prime}(M) \to \scrD^{\prime}(M),
\end{equation}
where $\scrD^{\prime}(M)$ is the space of distributions on $M$.
For example,
$Z \in \Gamma(T^{1, 0} M)$ is an element of $\Hpsido^{1}(M)$
and $T \in \Hpsido^{2}(M)$.
Note that $\Hpsido^{- \infty}(M) \coloneqq \bigcap_{m \in \bbZ} \Hpsido^{m}(M)$
coincides with the space of smoothing operators on $M$.
As in the usual pseudodifferential calculus,
there exists the Heisenberg principal symbol,
which has some good properties:

\begin{proposition}[\cite{Ponge2008-Book}*{Propositions 3.2.6 and 3.2.9}]
\label{prop:Heisenberg-principal-symbol}
	(i) The Heisenberg principal symbol $\sigma_{m}$ gives the following exact sequence:
	\begin{equation}
		0 \to \Hpsido^{m - 1}(M) \hookrightarrow \Hpsido^{m}(M)
			\xrightarrow{\sigma_{m}} \Hsymbol^{m}(M) \to 0.
	\end{equation}

	(ii) For $A_{1} \in \Hpsido^{m_{1}}(M)$ and $A_{2} \in \Hpsido^{m_{2}}(M)$,
	the operator $A_{1} A_{2}$ is a Heisenberg pseudodifferential operator of order $m_{1} + m_{2}$,
	and
	\begin{equation}
		\sigma_{m_{1} + m_{2}}(A_{1} A_{2}) = \sigma_{m_{1}}(A_{1}) \ast \sigma_{m_{2}}(A_{2}).
	\end{equation}
\end{proposition}

On the other hand,
there exists a crucial difference between the usual pseudodifferential calculus and the Heisenberg one.
Since the product of Heisenberg principal symbol is non-commutative,
the commutator $\comm{A_{1}}{A_{2}}$ of $A_{1} \in \Hpsido^{m_{1}}(M)$ and $A_{2} \in \Hpsido^{m_{2}}(M)$
is not an element of $\Hpsido^{m_{1} + m_{2} - 1}(M)$ in general.
However,
we have the following

\begin{lemma}[\cite{Takeuchi2023-GJMS}*{Lemma 4.2}]
\label{lem:commutator-for-Reeb-vector-field}
	If $A \in \Hpsido^{m}(M)$,
	then $\comm{T}{A} \in \Hpsido^{m + 1}(M)$.
\end{lemma}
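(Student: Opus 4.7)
The plan is to prove the lemma by a Heisenberg principal symbol computation. Composition of Heisenberg pseudodifferential operators is governed, at each point $x \in M$, by a fiberwise non-commutative star product on the osculating Heisenberg group, and the standard principal-symbol formula $\sigma^{H}_{m_{1}+m_{2}}([A_{1}, A_{2}]) = [\sigma^{H}_{m_{1}}(A_{1}), \sigma^{H}_{m_{2}}(A_{2})]_{\star}$ reduces the task to showing that $\sigma^{H}_{2}(T)$ star-commutes with every Heisenberg symbol; if so, the order of $[T, A]$ drops from the naive $m + 2$ down to $m + 1$ by the definition of the Heisenberg symbol filtration.

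The key computation is the identification of $\sigma^{H}_{2}(T)$. In an admissible frame $(T, Z_{1}, Z_{\ovone})$, with dual Heisenberg covariables $(\xi_{0}, \xi_{1}, \xi_{\ovone})$ on the osculating Heisenberg algebra (so that $\xi_{0}$ carries Heisenberg weight $2$), one finds $\sigma^{H}_{2}(T) = \sqrt{-1}\, \xi_{0}$. The decisive observation is then that $\xi_{0}$ corresponds to the central direction of the Heisenberg Lie algebra and therefore lies in the center of the star-product algebra on $G_{x} M$; in particular $[\sqrt{-1}\, \xi_{0}, a]_{\star} = 0$ for every Heisenberg symbol $a$. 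This yields $\sigma^{H}_{m+2}([T, A]) = 0$, and hence $[T, A] \in \Hpsido^{m+1}(M)$.

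The main obstacle is essentially notational rather than substantive: one must align carefully with the conventions of \cite{Ponge2008-Book} used throughout this section, in particular verifying that $T$ really has Heisenberg principal symbol proportional to the central covariable (modulo Heisenberg order $1$ terms) and that the star-commutator formula for principal symbols is available in exactly the form used above. Both are routine once the local Heisenberg coordinate description of $T$ is fixed, so the proof collapses to this single symbolic observation: in the classical pseudodifferential calculus, commutation with any operator automatically lowers the order by one because symbol multiplication is commutative, and here the same drop occurs precisely because $\sigma^{H}_{2}(T)$ depends only on the central covariable, for which the star product is again commutative.
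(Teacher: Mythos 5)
Your argument is correct: the Heisenberg principal symbol of the Reeb field $T$ is (a multiple of) the central covariable of the osculating Heisenberg group, which is central for the fiberwise convolution (star) product, so the order-$(m+2)$ principal symbol of $[T, A]$ vanishes and the commutator lies in $\Hpsido^{m+1}(M)$. The paper itself imports this statement from \cite{Takeuchi2023-GJMS}*{Lemma 4.2}, whose proof rests on the same symbol-centrality observation, so your route is essentially the same.
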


Next we consider approximate inverses of Heisenberg pseudodifferential operators.
We write $A \sim B$ if $A - B$ is a smoothing operator.
Let $A \in \Hpsido^{m}(M)$.
An operator $B \in \Hpsido^{- m}(M)$ is called a \emph{parametrix} of $A$
if $A B \sim I$ and $B A \sim I$.
The existence of a parametrix of a Heisenberg pseudodifferential operator
is determined only by its Heisenberg principal symbol.

\begin{proposition}[\cite{Ponge2008-Book}*{Proposition 3.3.1}]
\label{prop:equivalent-conditions-for-existence-of-parametrix}
	Let $A \in \Hpsido^{m}(M)$
	with Heisenberg principal symbol $a \in \Hsymbol^{m}(M)$.
	Then the following are equivalent:
	\begin{enumerate}
		\item $A$ has a parametrix;
		\item there exists $B \in \Hpsido^{- m}(M)$ such that
			$A B - I, B A - I \in \Hpsido^{- 1}(M)$;
		\item there exists $b \in \Hsymbol^{-m}(M)$ such that
			$a \ast b = b \ast a = 1$.
	\end{enumerate}
\end{proposition}

Now consider the Heisenberg differential operator $\Delta_{b} + I$ of order $2$.
It is known that this operator has a parametrix;
see the proof of~\cite{Ponge2008-Book}*{Proposition 3.5.7} for example.
Since $\Delta_{b} + I$ is positive and self-adjoint,
the $k / 2$-th power $(\Delta_{b} + I)^{k / 2}$ of $\Delta_{b} + I$,
$k \in \bbZ$,
is a Heisenberg pseudodifferential operator of order $k$~\cite{Ponge2008-Book}*{Theorems 5.3.1 and 5.4.10}.
Using this operator,
we define
\begin{equation}
	\HSobolev^{k}(M)
	:= \Set{ u \in \scrD^{\prime}(M) \mid (\Delta_{b} + I)^{k / 2} u \in L^{2}(M) }.
\end{equation}
This space is a Hilbert space with the inner product
\begin{equation}
	\iproduct{u}{v}_{k}
	= \iproduct{(\Delta_{b} + I)^{k / 2} u}{(\Delta_{b} + I)^{k / 2} v}_{L^{2}}.
\end{equation}
The space $C^{\infty}(M)$ is dense in $\HSobolev^{k}(M)$ for any $k \in \bbZ$,
and we have
\begin{equation}
	C^{\infty}(M) = \bigcap_{k \in \bbZ} \HSobolev^{k}(M),
	\qquad
	\scrD^{\prime}(M) = \bigcup_{k \in \bbZ} \HSobolev^{k}(M)
\end{equation}
as topological vector spaces~\cite{Ponge2008-Book}*{Proposition 5.5.3}.
Remark that
the Hilbert space $\HSobolev^{k}(M)$ coincides with the Folland-Stein space $S^{k, 2}(M)$
as a topological vector space~\cite{Ponge2008-Book}*{Proposition 5.5.5}.
Heisenberg pseudodifferential operators act on these Hilbert spaces as follows:

\begin{proposition}[\cite{Ponge2008-Book}*{Propositions 5.5.8} and \cite{Takeuchi2023-GJMS}*{Proposition 4.6}]
\label{prop:mapping-properties-of-Hpsido}
	Any $A \in \Hpsido^{m}(M)$ extends to a continuous linear operator
	\begin{equation}
		A \colon \HSobolev^{k + m}(M) \to \HSobolev^{k}(M)
	\end{equation}
	for every $k \in \bbZ$.
	In particular if $m < 0$,
	the operator $A \colon L^{2}(M) \to L^{2}(M)$ is compact.
\end{proposition}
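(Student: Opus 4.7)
The plan is to reduce the full statement to two independent ingredients: the $L^{2}$-boundedness of order-zero Heisenberg pseudodifferential operators (a Calderón--Vaillancourt type theorem), and the compactness of the embedding $\HSobolev^{s}(M) \hookrightarrow L^{2}(M)$ for $s > 0$. Both are standard but substantial facts; the bulk of the argument is a formal manipulation reducing the general mapping property to the order-zero case acting on $L^{2}$.

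First, I would establish the base case $k=m=0$: every $A \in \Hpsido^{0}(M)$ extends to a bounded operator on $L^{2}(M)$. In a Heisenberg chart modelled on the Heisenberg group $\Hgroup$, one expresses $A$ through its Heisenberg symbol and applies a Cotlar--Stein almost-orthogonality argument adapted to the non-isotropic scaling, using the group convolution structure to control the resulting oscillatory integrals. This is the Heisenberg-calculus analog of the Calderón--Vaillancourt theorem and is the sole genuinely analytic input of the proof.

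Next, I would deduce the general boundedness $A\colon \HSobolev^{k+m}(M) \to \HSobolev^{k}(M)$ by conjugation. Since $(\Delta_{b}+I)^{k/2}\colon \HSobolev^{k}(M) \to L^{2}(M)$ is an isometric isomorphism by construction, the assertion is equivalent to the $L^{2}$-boundedness of
\begin{equation}
\widetilde{A} \coloneqq (\Delta_{b}+I)^{k/2}\, A\, (\Delta_{b}+I)^{-(k+m)/2}.
\end{equation}
Because $(\Delta_{b}+I)^{s/2} \in \Hpsido^{s}(M)$ for every $s \in \bbZ$ (as recalled just above the proposition in the excerpt) and the composition of Heisenberg pseudodifferential operators respects orders, one has $\widetilde{A} \in \Hpsido^{0}(M)$, so the desired $L^{2}$-bound follows from the base case.

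Finally, for the compactness when $m < 0$, applying the previous step with $k=-m$ shows that $A\colon L^{2}(M) \to \HSobolev^{-m}(M)$ is continuous, so it suffices to prove that the inclusion $\HSobolev^{s}(M) \hookrightarrow L^{2}(M)$ is compact for every $s > 0$. Using the identification of $\HSobolev^{s}(M)$ with the Folland--Stein space $S^{s,2}(M)$ already cited in the excerpt, the subelliptic estimate for $\Delta_{b}$ yields a continuous embedding $\HSobolev^{s}(M) \hookrightarrow H^{s/2}(M)$ into the classical Sobolev space, and the Rellich--Kondrachov theorem on the closed manifold $M$ then supplies the required compactness. The main obstacle is the analytic content of the base case: the $L^{2}$-boundedness of order-zero Heisenberg operators requires the full Heisenberg symbolic calculus together with a delicate non-isotropic almost-orthogonality argument, whereas the remaining steps are symbolic bookkeeping plus a reduction to a classical compactness statement.
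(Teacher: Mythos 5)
Your argument is correct and is essentially the route taken in the sources the paper cites for this proposition (the paper itself gives no proof): reduction of the Sobolev mapping property to the $L^{2}$-boundedness of order-zero operators by conjugation with $(\Delta_{b}+I)^{k/2}$, and compactness for $m<0$ via the continuous map $L^{2}(M)\to \HSobolev^{-m}(M)$ combined with the compact inclusion $\HSobolev^{-m}(M)\hookrightarrow L^{2}(M)$ coming from the Folland--Stein embedding into a classical Sobolev space and Rellich's theorem. No gaps beyond the explicitly acknowledged standard analytic input (the order-zero $L^{2}$ bound).
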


\section{Proof of \cref{thm:ess-self-adj-of-CR-Paneitz,thm:spectrum-of-CR-Paneitz}}
\label{section:proof-of-main-theorems}

Let $(M, T^{1, 0} M, \theta)$ be a closed strictly pseudoconvex CR manifold of dimension three.
If $(M, T^{1, 0} M)$ is embeddable,
then there exist the orthogonal projection to $\Ker \Box_{b}$,
known as the \emph{\Szego projection},
and the partial inverse of $\Box_{b}$~\cite{Beals-Greiner1988}*{Theorem 25.20}.
If $(M, T^{1, 0} M)$ is non-embeddable,
then we can not use the partial inverse of $\Box_{b}$.
However,
we have an \emph{approximate} \Szego projection and partial inverse of $\Box_{b}$.

\begin{theorem}[\cite{Beals-Greiner1988}*{Proposition 25.4 and Corollaries 25.64 and 25.67}]
\label{thm:approximate-Szego-projection}
	There exist $S \in \Hpsido^{0}(M)$ and $N \in \Hpsido^{- 2}(M)$ such that
	\begin{equation}
		\Box_{b} N + S
		\sim N \Box_{b} + S
		\sim I, 
		\qquad
		\Box_{b} S
		\sim S \Box_{b}
		\sim 0,
		\qquad
		S
		\sim S^{\ast}
		\sim S^{2},
		\qquad
		\delb_{b} S
		\sim 0.
	\end{equation}
\end{theorem}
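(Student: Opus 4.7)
The plan is to follow the standard Heisenberg-calculus construction of an approximate parametrix paired with an approximate orthogonal projection onto $\Ker \Box_b$. The starting point is a pointwise model analysis. At each $p \in M$, after freezing coefficients on the osculating Heisenberg group, the operator $\Box_b$ becomes its model on $\bbH^{3}$, for which Folland--Stein supply an explicit fundamental solution and for which the orthogonal projection onto the $L^2$-kernel is given by the explicit Cauchy--Szegő kernel. Pulling these expressions through the Heisenberg symbol map yields a leading symbol $s_{0}$ of order $0$ and a leading symbol $n_{-2}$ of order $-2$ which, in the noncommutative product $\star$ of Heisenberg symbols, satisfy
\begin{equation}
\sigma_{2}(\Box_{b}) \star n_{-2} + s_{0} = 1, \qquad \sigma_{2}(\Box_{b}) \star s_{0} = 0, \qquad s_{0} \star s_{0} = s_{0} = s_{0}^{\ast}.
\end{equation}
Because the model Szegő projection annihilates $\delb_b$ on $\bbH^{3}$, one also obtains $\sigma_{1}(\delb_{b}) \star s_{0} = 0$ at the principal level.

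Next I would upgrade these principal-level identities to congruences modulo $\Hpsido^{-\infty}(M)$ by an inductive symbol construction. Suppose $S_{k} \in \Hpsido^{0}(M)$ and $N_{k} \in \Hpsido^{-2}(M)$ have been produced so that each of the errors $\Box_{b} N_{k} + S_{k} - I$, $N_{k} \Box_{b} + S_{k} - I$, $\Box_{b} S_{k}$, $S_{k} \Box_{b}$, $S_{k}^{2} - S_{k}$, $S_{k} - S_{k}^{\ast}$, and $\delb_{b} S_{k}$ lies in $\Hpsido^{-k}(M)$. Each such error splits symbolically into a part in the image of left multiplication by $\sigma_{2}(\Box_{b})$ and a part annihilated by $s_{0}$; the first part is removed by a correction to $N_{k}$ of order $-k-2$, and the second by a correction to $S_{k}$ of order $-k$. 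Asymptotic summation (a Heisenberg analogue of Borel's lemma inside $\Hpsido^{\ast}(M)$) then produces $S$ and $N$ satisfying every listed congruence modulo smoothing. To secure $S \sim S^{\ast}$ and $S \sim S^{2}$ exactly, and not just at the leading symbol level, I would symmetrize the constructed $S$ and correct by a Neumann-type series in the resulting smoothing defects, finally readjusting $N$ to preserve $\Box_{b} N + S \sim I$ and $N \Box_{b} + S \sim I$.

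The main obstacle is precisely the symbolic splitting that drives the inductive step. Because $\sigma_{2}(\Box_{b})$ is invertible only off its characteristic variety and the Heisenberg symbol product is genuinely noncommutative, one cannot simply divide by $\sigma_{2}(\Box_{b})$: the correct device is the simultaneous use of $n_{-2}$ as a right-inverse modulo the range of $s_{0}$ and of $s_{0}$ as the projection onto the part of the error that $\sigma_{2}(\Box_{b})$ cannot see. Once this split is set up, the algebraic manipulations stay inside $\Hpsido^{\ast}(M)$ by the closure properties under composition, transpose, and formal adjoint recalled in \cref{section:Heisenberg-calculus}, and the intermediate commutators of the form $\comm{T}{\cdot}$ that arise when computing the symbol product remain in the calculus by \cref{lem:commutator-for-Reeb-vector-field}, so convergence modulo smoothing operators is automatic.
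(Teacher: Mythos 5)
The paper does not prove this statement; it is cited verbatim from Beals--Greiner (Proposition~25.4 and Corollaries~25.64 and~25.67), so there is no in-paper argument to compare against. Your sketch is a reasonable reconstruction of that cited construction: freezing at the osculating Heisenberg group, taking the Folland--Stein fundamental solution and Cauchy--\Szego\ kernel as the model symbols $n_{-2}$ and $s_{0}$, decomposing each error via $1 = \sigma_{2}(\Box_{b})\star n_{-2} + s_{0}$ into a piece killed by correcting $N$ and a piece killed by correcting $S$, and asymptotically summing. That is the right shape of the Beals--Greiner argument, and the exact bookkeeping of the seven error terms is the real content that your outline compresses.

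Two remarks on imprecisions, neither fatal. First, your appeal to \cref{lem:commutator-for-Reeb-vector-field} is a red herring: the closure of the Heisenberg symbol product under composition is a built-in feature of the calculus and does not rest on the $\comm{T}{\cdot}$ gain; that lemma is used elsewhere in the paper (e.g.\ in \cref{lem:commutator-of-Szego-and-Laplacian}) for a different purpose, and invoking it here mischaracterizes why the inductive step stays in $\Hpsido^{\ast}(M)$. Second, the ``Neumann-type series'' step is unnecessary: the relations $S \sim S^{\ast}$ and $S \sim S^{2}$ are only asserted modulo $\Hpsido^{-\infty}(M)$, which is exactly what the inductive symbol correction together with asymptotic completion already delivers, so there is nothing further to ``secure exactly.''
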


We first show some commutation relations of $S$ and $\ovS$.

\begin{lemma}
\label{lem:commutator-of-Szego-and-Laplacian}
	One has $\comm{S}{\overline{\Box}_{b}}, \comm{\ovS}{\Box_{b}} \in \Hpsido^{1}(M)$.
\end{lemma}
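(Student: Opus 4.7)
My plan is to decompose $\overline{\Box}_b$ into its Kohn Laplacian part plus a Reeb vector field part, so that one commutator is handled by the approximate Szegő properties and the other by the commutator lemma for $T$.

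First I would invoke equation \cref{eq:complex-conjugate-of-Kohn-Laplacian} to write
\begin{equation}
    [S, \overline{\Box}_b] = [S, \Box_b] - \sqrt{-1}\, [S, T].
\end{equation}
For the first term, \cref{thm:approximate-Szego-projection} says $S\Box_b \sim 0 \sim \Box_b S$, so $[S, \Box_b]$ is smoothing and in particular lies in $\Hpsido^1(M)$. For the second term, since $S \in \Hpsido^0(M)$, \cref{lem:commutator-for-Reeb-vector-field} gives $[T, S] \in \Hpsido^1(M)$, and hence so does $[S, T]$. Summing, $[S, \overline{\Box}_b] \in \Hpsido^1(M)$.

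For the second commutator $[\overline{S}, \Box_b]$, I would pass to complex conjugates. Since $\overline{\Box_b u} = \overline{\Box}_b \overline{u}$ for smooth $u$, we have $\overline{\Box_b} = \overline{\Box}_b$ and $\overline{\,[S, \overline{\Box}_b]\,} = [\overline{S}, \Box_b]$. The class $\Hpsido^1(M)$ is preserved under complex conjugation, so $[\overline{S}, \Box_b] \in \Hpsido^1(M)$ as well.

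The only step requiring any care is the bookkeeping in the first paragraph: a priori the commutator of an order-$0$ Heisenberg operator with an order-$2$ one is only of order $2$, so one must actually use the special structure that $S$ nearly commutes with $\Box_b$ (which kills the order-$2$ piece entirely) and that the remaining obstruction comes solely from $T$, for which \cref{lem:commutator-for-Reeb-vector-field} provides the single order of gain. After that, the second commutator is free via conjugation.
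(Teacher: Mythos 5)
Your proof is correct and matches the paper's argument essentially step for step: both decompose $\overline{\Box}_b = \Box_b - \sqrt{-1}\,T$, use $S\Box_b \sim 0 \sim \Box_b S$ from the approximate \Szego\ projection to dispose of the $\Box_b$ contribution, invoke \cref{lem:commutator-for-Reeb-vector-field} for $[S,T]$, and obtain $\comm{\ovS}{\Box_b}$ by complex conjugation. Your presentation packages the computation as $[S,\overline{\Box}_b] = [S,\Box_b] - \sqrt{-1}[S,T]$ up front, which is marginally tidier than the paper's in-line manipulation, but the mathematics is identical.
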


\begin{proof}
	It follows from \cref{thm:approximate-Szego-projection} that
	\begin{equation}
		S \overline{\Box}_{b}
		= S (\Box_{b} - \sqrt{- 1} T)
		\sim - \sqrt{- 1} T S - \sqrt{- 1} \comm{S}{T}
		\sim \overline{\Box}_{b} S - \sqrt{- 1} \comm{S}{T}.
	\end{equation}
	We obtain from \cref{lem:commutator-for-Reeb-vector-field} that
	\begin{equation}
		\comm{S}{\overline{\Box}_{b}}
		\sim - \sqrt{- 1} \comm{S}{T}
		\in \Hpsido^{1}(M).
	\end{equation}
	Taking the complex conjugate yields $\comm{\ovS}{\Box_{b}} \in \Hpsido^{1}(M)$.
\end{proof}

Next we consider the composition $S$ and $\ovS$.

\begin{lemma}
\label{lem:composition-of-Szego-projection}
	One has $S \ovS, \ovS S \in \Hpsido^{- 1}(M)$.
\end{lemma}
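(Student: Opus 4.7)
The plan is to prove $S\ovS \in \Hpsido^{-1}(M)$ first; the claim for $\ovS S$ then follows by taking formal adjoints. Indeed, $S\sim S^{\ast}$ gives, after complex conjugation, $\ovS \sim \ovS^{\ast}$, so $(S\ovS)^{\ast} \sim \ovS S$, and $\Hpsido^{-1}(M)$ is closed under the formal adjoint.

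The strategy is a Heisenberg principal symbol argument. From \cref{thm:approximate-Szego-projection} we have $\Box_{b} S \sim 0$, so
\begin{equation}
	\Box_{b}(S\ovS) = (\Box_{b} S)\ovS \sim 0.
\end{equation}
Taking complex conjugates in $S\Box_{b} \sim 0$ gives $\ovS\,\overline{\Box}_{b} \sim 0$, and hence
\begin{equation}
	(S\ovS)\overline{\Box}_{b} = S(\ovS\,\overline{\Box}_{b}) \sim 0.
\end{equation}
Passing to Heisenberg principal symbols, these translate into the pointwise identities $\sigma_{H}(\Box_{b})\cdot\sigma_{H}(S\ovS) = 0$ and $\sigma_{H}(S\ovS)\cdot\sigma_{H}(\overline{\Box}_{b}) = 0$ at every nonzero Heisenberg covector.

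To conclude, I would invoke the classical fact that at each nonzero Heisenberg covector at least one of $\sigma_{H}(\Box_{b})$ or $\sigma_{H}(\overline{\Box}_{b})$ is invertible (see \cite{Beals-Greiner1988}). At a covector on the Reeb line with frequency $\tau$, $\sigma_{H}(\Box_{b})$ has a one-dimensional kernel for $\tau$ of one sign and is invertible for the other sign, while $\sigma_{H}(\overline{\Box}_{b})$ exhibits the opposite behavior; away from the Reeb line both are invertible. Combined with the two annihilation identities above, this forces the order-$0$ symbol $\sigma_{H}(S\ovS)$ to vanish pointwise, and therefore $S\ovS \in \Hpsido^{-1}(M)$.

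The main obstacle is this final pointwise symbolic step: converting the operator-level annihilation identities into vanishing of the principal symbol requires the structural description of $\sigma_{H}(\Box_{b})$ on each osculating Heisenberg group, which is outside the purely algebraic framework laid out so far but is well-known from the Heisenberg calculus literature. A purely algebraic route using $\overline{\Box}_{b} = \Box_{b} - \sqrt{-1}T$, the approximate inverses $N\Box_{b} + S \sim I$ and $\overline{N}\,\overline{\Box}_{b} + \ovS \sim I$, and \cref{lem:commutator-for-Reeb-vector-field} tends to produce only circular rewrites of $S\ovS$ that never drop its order, so I expect the symbolic step to be unavoidable.
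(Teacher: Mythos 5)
Your strategy---derive $\Box_{b}(S\ovS)\sim 0$ and $(S\ovS)\overline{\Box}_{b}\sim 0$ and then argue at the level of Heisenberg principal symbols---is genuinely different from the paper's and is morally sound, but the decisive step is only asserted, not proved, and you say so yourself. To pass from the two annihilation identities to $\sigma_{H}(S\ovS)=0$ you need: (i) the representation-theoretic description of $\sigma_{H}(\Box_{b})$ and $\sigma_{H}(\overline{\Box}_{b})$ on each osculating Heisenberg group (the harmonic-oscillator analysis giving a one-dimensional kernel for one sign of the central frequency, invertibility for the other, and invertibility in all nontrivial characters); (ii) in the representation where $\pi(\sigma_{H}(\Box_{b}))$ has kernel, that $\pi(\sigma_{H}(\overline{\Box}_{b}))$ has dense range and $\pi(\sigma_{H}(S\ovS))$ is bounded, so that $\pi(\sigma_{H}(S\ovS))\pi(\sigma_{H}(\overline{\Box}_{b}))=0$ really forces $\pi(\sigma_{H}(S\ovS))=0$; and (iii) injectivity of the fiberwise group Fourier transform on homogeneous symbols, i.e.\ that a principal symbol vanishing in every nontrivial irreducible representation is zero, so that ``vanishing in all representations'' upgrades to vanishing of the symbol and hence to membership in $\Hpsido^{-1}(M)$. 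None of this is available from the lemmas quoted in the paper's framework; it can be extracted from Beals--Greiner, but note also that your phrase ``invertible at each nonzero Heisenberg covector'' blurs the point that invertibility here is invertibility in the noncommutative convolution algebra of the osculating group, not pointwise invertibility of a scalar symbol.

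The claim that closes your proposal---that a purely algebraic route only produces circular rewrites---is where you went astray: the paper's proof is a short algebraic computation, and the ingredient missing from your list of algebraic tools is the parametrix of $\Delta_{b}$ used together with \cref{lem:commutator-of-Szego-and-Laplacian}. Since $\Box_{b}S\sim 0$ and $\overline{\Box}_{b}\ovS\sim 0$ (the latter by conjugating $\Box_{b}S\sim 0$), one has
\begin{equation}
	\Delta_{b} S \ovS
	\sim \overline{\Box}_{b} S \ovS
	= S \overline{\Box}_{b} \ovS + \comm{\overline{\Box}_{b}}{S} \ovS
	\sim \comm{\overline{\Box}_{b}}{S} \ovS
	\in \Hpsido^{1}(M),
\end{equation}
and composing with a parametrix of $\Delta_{b}$, which has order $-2$, gives $S\ovS \in \Hpsido^{-1}(M)$; taking complex conjugates yields $\ovS S \in \Hpsido^{-1}(M)$ (your reduction of $\ovS S$ to $S\ovS$ via formal adjoints is equally fine). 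So the statement itself is correct and your identities in the first half are correct, but as it stands the heart of your argument rests on unproved structural facts, while the intended proof needs nothing beyond \cref{thm:approximate-Szego-projection}, \cref{lem:commutator-of-Szego-and-Laplacian}, and the parametrix of $\Delta_{b}$.
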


\begin{proof}
	It follows from \cref{lem:commutator-of-Szego-and-Laplacian} that
	\begin{equation}
		\Delta_{b} S \ovS
		\sim \overline{\Box}_{b} S \ovS
		= S \overline{\Box}_{b} \ovS + \comm{\overline{\Box}_{b}}{S} \ovS
		\sim \comm{\overline{\Box}_{b}}{S} \ovS
		\in \Hpsido^{1}(M)
	\end{equation}
	Since $\Delta_{b}$ has a parametrix,
	we have $S \ovS \in \Hpsido^{- 1}(M)$.
	Taking the complex conjugate gives
	$\ovS S \in \Hpsido^{- 1}(M)$.
\end{proof}

\begin{remark}
	If $M$ is embeddable and $S$ is the \Szego projection,
	then we have $S \ovS \sim \ovS S \sim 0$;
	see \cite{Hsiao2015}*{Lemma 4.2}.
\end{remark}

Let $\GJMS$ be the CR Paneitz operator on $(M, T^{1, 0} M, \theta)$.
Since $\GJMS = \calC \delb_{b}$,
we have $P S \sim 0$.
Taking the complex conjugate yields $P \ovS \sim 0$.
Hence $\Pi_{0} \coloneqq S + \ovS \in \Hpsido^{0}(M)$ satisfies $\GJMS \Pi_{0} \sim 0$.
Set $G_{0} \coloneqq N \ovN \in \Hpsido^{- 4}(M)$.
It follows from \cref{lem:commutator-of-Szego-and-Laplacian,lem:composition-of-Szego-projection} that
\begin{equation}
	\GJMS G_{0}
	\equiv \overline{\Box}_{b} (I - S) \ovN
	\equiv (I - S) \overline{\Box}_{b} \ovN
	\equiv (I - S) (I - \ovS)
	\equiv I - \Pi_{0}
\end{equation}
modulo $\Hpsido^{- 1}(M)$;
note that $\calQ \in \Hpsido^{2}(M)$.
Thus we have
\begin{equation}
	R_{0}
	\coloneqq \GJMS G_{0} + \Pi_{0} - I \in \Hpsido^{- 1}(M).
\end{equation}

\begin{lemma}
	The operator $I + R_{0} \in \Hpsido^{0}(M)$ has a parametrix $A_{0} \in \Hpsido^{0}(M)$.
	Moreover,
	$A_{0}$ satisfies $A_{0} - I \in \Hpsido^{- 1}(M)$.
\end{lemma}

\begin{proof}
	Since
	\begin{equation}
		I (I + R_{0}) - I
		= (I + R_{0}) I - I
		= R_{0}
		\in \Hpsido^{- 1}(M),
	\end{equation}
	$I + R_{0}$ has a parametrix $A_{0}$ by \cref{prop:equivalent-conditions-for-existence-of-parametrix}.
	We obtain from $R_{0} \in \Hpsido^{-1}(M)$ and \cref{prop:Heisenberg-principal-symbol} that
	\begin{equation}
		\sigma_{0}(A_{0})
		= \sigma_{0}((I + R_{0}) A_{0})
		= \sigma_{0}(I),
	\end{equation}
	which means $A_{0} - I \in \Hpsido^{-1}(M)$.
\end{proof}

The proof of the following proposition is inspired by
that of~\cite{Beals-Greiner1988}*{Proposition 25.4}.

\begin{proposition}
	There exist $\Pi_{\infty} \in \Hpsido^{0}(M)$ and $G_{\infty} \in \Hpsido^{- 4}(M)$ such that
	\begin{gather}
		G_{\infty} \GJMS + \Pi_{\infty}
		\sim P G_{\infty} + \Pi_{\infty}
		= I, \\
		G_{\infty}^{\ast}
		\sim G_{\infty},
		\qquad
		\Pi_{\infty}^{\ast}
		\sim \Pi_{\infty}^{2}
		\sim \Pi_{\infty},
		\qquad
		\Pi_{\infty} - \Pi_{0}
		\in \Hpsido^{- 1}(M), \\
		\Pi_{\infty} \GJMS
		\sim \GJMS \Pi_{\infty}
		\sim 0,
		\qquad
		\Pi_{\infty} G_{\infty}
		\sim G_{\infty} \Pi_{\infty}
		\sim 0.
	\end{gather}
\end{proposition}

\begin{proof}
	Let $A_{0} \in \Hpsido^{0}(M)$ be a parametrix of $I + R_{0}$,
	and set
	\begin{equation}
		G_{\infty}
		\coloneqq (I - \Pi_{0} A_{0}) G_{0} A_{0}
		\in \Hpsido^{- 4}(M),
		\qquad
		\Pi_{\infty}
		\coloneqq I - \GJMS G_{\infty}
		\in \Hpsido^{0}(M).
	\end{equation}
	First note that
	\begin{equation}
		\Pi_{\infty}
		= I - (\GJMS - \GJMS \Pi_{0} A_{0}) G_{0} A_{0}
		\sim (I + R_{0} - \GJMS G_{0}) A_{0}
		\sim \Pi_{0} A_{0}.
	\end{equation}
	In particular,
	$\GJMS \Pi_{\infty} \sim \GJMS \Pi_{0} A_{0} \sim 0$
	and $\Pi_{\infty} - \Pi_{0} \in \Hpsido^{- 1}(M)$.
	These yield
	\begin{align}
		\Pi_{\infty}^{\ast}
		= \Pi_{\infty}^{\ast} (\GJMS G_{\infty} + \Pi_{\infty})
		\sim \Pi_{\infty}^{\ast} \Pi_{\infty}
		\sim (G_{\infty}^{\ast} \GJMS + \Pi_{\infty}^{\ast}) \Pi_{\infty}
		= \Pi_{\infty}.
	\end{align}
	We also have
	\begin{equation}
		\Pi_{\infty} G_{\infty}
		\sim \Pi_{\infty} (I - \Pi_{\infty}) G_{0} A_{0}
		= (\Pi_{\infty} - \Pi_{\infty}^{2}) G_{0} A_{0}
		\sim 0
	\end{equation}
	and
	\begin{align}
		G_{\infty} \Pi_{\infty}
		= (G_{\infty}^{\ast} \GJMS + \Pi_{\infty}^{\ast}) G_{\infty} \Pi_{\infty}
		&\sim (G_{\infty}^{\ast} \GJMS G_{\infty} + \Pi_{\infty} G_{\infty}) \Pi_{\infty} \\
		&\sim G_{\infty}^{\ast} (I - \Pi_{\infty}) \Pi_{\infty} \\
		&= G_{\infty}^{\ast} (\Pi_{\infty} - \Pi_{\infty}^{2}) \\
		&\sim 0.
	\end{align}
	Therefore
	\begin{equation}
		G_{\infty}^{\ast}
		\sim G_{\infty}^{\ast} (I - \Pi_{\infty})
		= G_{\infty}^{\ast} \GJMS G_{\infty}
		\sim (G_{\infty}^{\ast} \GJMS + \Pi_{\infty}) G_{\infty}
		= G_{\infty},
	\end{equation}
	which completes the proof.
\end{proof}

Consider $\GJMS$ as an unbounded closed operator acting on $L^{2}(M)$
by the maximal closed extension.
The domain $\Dom \GJMS$ contains $\HSobolev^{4}(M)$ by \cref{prop:mapping-properties-of-Hpsido}.
Conversely,
any $u \in \Dom \GJMS$
is an element of $\HSobolev^{4}(M)$ modulo $\Ran \Pi_{\infty}$
by the lemma below.

\begin{lemma}
\label{lem:domain-of-critical-GJMS-operator}
	For $u \in \Dom \GJMS$,
	one has $u - \Pi_{\infty} u \in \HSobolev^{4}(M)$.
	In particular,
	$\Dom \GJMS = \Ran \Pi_{\infty} + \HSobolev^{4}(M)$.
\end{lemma}

\begin{proof}
	Set
	\begin{equation} \label{eq:left-parametrix}
		R_{\infty}
		\coloneqq G_{\infty} \GJMS + \Pi_{\infty} - I
		\in \Hpsido^{- \infty}(M).
	\end{equation}
	If $v = \GJMS u \in L^{2}(M)$,
	then
	\begin{equation}
		u - \Pi_{\infty} u
		= G_{\infty} v - R_{\infty} u
		\in \HSobolev^{4}(M).
	\end{equation}
	In particular,
	$u \in \Ran \Pi_{\infty} + \HSobolev^{4}(M)$.
	Moreover,
	$\GJMS \Pi_{\infty} \sim 0$ implies $\Ran \Pi_{\infty} \subset \Dom \GJMS$.
	Therefore we have $\Dom \GJMS = \Ran \Pi_{\infty} + \HSobolev^{4}(M)$.
\end{proof}

\begin{proof}[Proof of \cref{thm:ess-self-adj-of-CR-Paneitz}]
	It suffices to show that $\GJMS$ is symmetric.
	Let $u, v \in \Dom \GJMS$.
	It follows from \cref{lem:domain-of-critical-GJMS-operator} that
	$v^{\prime} \coloneqq v - \Pi_{\infty} v$ is in $\HSobolev^{4}(M)$.
	Since $P \Pi_{\infty} \sim 0$,
	\begin{equation}
		\iproduct{\GJMS u}{\Pi_{\infty} v}_{0}
		= \iproduct{\Pi_{\infty}^{\ast} \GJMS u}{v}_{0}
		= \iproduct{(\GJMS \Pi_{\infty})^{\ast} u}{v}_{0}
		= \iproduct{u}{\GJMS \Pi_{\infty} v}_{0}.
	\end{equation}
	Take a sequence $(v_{j})$ in $C^{\infty}(M)$
	such that $v_{j}$ converges to $v^{\prime}$ in $\HSobolev^{4}(M)$ as $j \to + \infty$.
	Then $\GJMS v_{j}$ converges to $\GJMS v^{\prime}$ in $L^{2}(M)$ as $j \to + \infty$
	by the continuity of $\GJMS \colon \HSobolev^{4}(M) \to L^{2}(M)$.
	Thus we have
	\begin{equation}
		\iproduct{\GJMS u}{v^{\prime}}_{0}
		= \lim_{j \to \infty} \iproduct{\GJMS u}{v_{j}}_{0}
		= \lim_{j \to \infty} \iproduct{u}{\GJMS v_{j}}_{0}
		= \iproduct{u}{\GJMS v^{\prime}}_{0}.
	\end{equation}
	Therefore $\iproduct{\GJMS u}{v}_{0} = \iproduct{u}{\GJMS v}_{0}$,
	which means that $\GJMS$ is symmetric.
\end{proof}

Let $E$ be the resolution of the identity for $\GJMS$ and fix $\lambda \geq 0$.
Set
\begin{equation}
	\pi_{\lambda} \coloneqq E(\clcl{- \lambda}{\lambda})
	\colon L^{2}(M) \to \Dom \GJMS.
\end{equation}
This is an orthogonal projection of $L^{2}(M)$ and satisfies
$\pi_{\lambda} \GJMS = \GJMS \pi_{\lambda}$ on $\Dom \GJMS$.
If $\lambda > 0$,
\begin{equation}
	N_{\lambda} \coloneqq \int_{\bbR} t^{- 1} \chi_{\clcl{- \lambda}{\lambda}^{c}}(t) \, d E(t)
	\colon L^{2}(M) \to \Dom \GJMS
\end{equation}
is a continuous self-adjoint operator and satisfies
\begin{gather}
	P N_{\lambda} + \pi_{\lambda} = I \text{\ on \ } L^{2}(M), \\
	N_{\lambda} P + \pi_{\lambda} = I \text{\ on \ } \Dom \GJMS.
\end{gather}

\begin{theorem}
\label{thm:smoothing-property-of-projection}
	If $\lambda > 0$,
	then $\GJMS \pi_{\lambda} \sim 0$.
\end{theorem}

\begin{proof}
	We first show that $\GJMS \pi_{\lambda}$ defines a continuous map
	from $L^{2}(M)$ to $\HSobolev^{4 k}(M)$ for any $k \in \bbZ_{\geq 0}$.
	It follows from $\GJMS G_{\infty} + \Pi_{\infty} = I$ that
	\begin{equation}
	\label{eq:111}
		\GJMS \pi_{\lambda}
		= (G_{\infty}^{\ast} \GJMS + \Pi_{\infty}^{\ast}) \GJMS \pi_{\lambda}
		= G_{\infty}^{\ast} (\GJMS \pi_{\lambda})^{2} + \Pi_{\infty}^{\ast} P \pi_{\lambda}.
	\end{equation}
	Since $\Pi_{\infty}^{\ast} P \sim \Pi_{\infty} P \sim 0$,
	the second term maps $L^{2}(M)$ to $\HSobolev^{4 k}(M)$ continuously for any $k \in \bbZ_{\geq 0}$.
	If $\GJMS \pi_{\lambda}$ maps $L^{2}(M)$ to $\HSobolev^{4 k}(M)$ continuously,
	then $G_{\infty}^{\ast} (\GJMS \pi_{\lambda})^{2}$ maps $L^{2}(M)$ to $\HSobolev^{4 k + 4}(M)$ continuously.
	Hence $\GJMS \pi_{\lambda} \colon L^{2}(M) \to \HSobolev^{4 k + 4}(M)$ is continuous.
	We obtain by induction that $\GJMS \pi_{\lambda}$ is a continuous operator
	from $L^{2}(M)$ to $\HSobolev^{4 k}(M)$ for any $k \in \bbZ_{\geq 0}$.
	Taking the adjoint yields that $\GJMS \pi_{\lambda}$ extends to a continuous operator
	from $\HSobolev^{- 4 l}(M)$ to $L^{2}(M)$ for any $l \in \bbZ_{\geq 0}$.

	We next show that $\pi_{\lambda}$ extends to a continuous operator acting on $\HSobolev^{- 4 l}(M)$
	for any $l \in \bbZ_{\geq 0}$.
	It follows from $\GJMS N_{\lambda} + \pi_{\lambda} = I$ that
	\begin{equation}
		\Pi_{\infty}^{\ast}
		= \Pi_{\infty}^{\ast} \GJMS N_{\lambda} + \Pi_{\infty}^{\ast} \pi_{\lambda}.
	\end{equation}
	Similarly,
	$\GJMS G_{\infty} + \Pi_{\infty} = I$ yields that
	\begin{equation}
		\pi_{\lambda}
		= G_{\infty}^{\ast} \GJMS \pi_{\lambda} + \Pi_{\infty}^{\ast} \pi_{\lambda}.
	\end{equation}
	Thus we have
	\begin{equation}
		\pi_{\lambda} - \Pi_{\infty}^{\ast}
		= G_{\infty}^{\ast} \GJMS \pi_{\lambda} - \Pi_{\infty}^{\ast} \GJMS N_{\lambda},
	\end{equation}
	which maps $L^{2}(M)$ to $\HSobolev^{4 k}(M)$ continuously for any $k \in \bbZ_{\geq 0}$.
	Taking the adjoint gives that
	$\pi_{\lambda} - \Pi_{\infty}$ extends to a continuous operator
	from $\HSobolev^{- 4 l}(M)$ to $L^{2}(M)$ for any $l \in \bbZ_{\geq 0}$.
	Since $\Pi_{\infty}$ defines a continuous operator acting on $\HSobolev^{- 4 l}(M)$,
	so does $\pi_{\lambda}$.
	This and \cref{eq:111} imply that $\GJMS \pi_{\lambda}$ extends to a continuous operator
	from $\HSobolev^{- 4 l}(M)$ to $\HSobolev^{4 k}(M)$ for any $k, l \in \bbZ_{\geq 0}$,
	which means that it is a smoothing operator.
\end{proof}

\begin{proof}[Proof of \cref{thm:spectrum-of-CR-Paneitz}]
	Fix $\lambda > 0$.
	Then $\Spec \GJMS \cap \clcl{- \lambda}{\lambda} = \Spec \GJMS \pi_{\lambda}$ and
	\begin{equation}
		\Ker (\GJMS - \mu I)
		= \Ker (\GJMS \pi_{\lambda} - \mu I)
	\end{equation}
	for any $0 < \abs{\mu} \leq \lambda$.
	On the other hand,
	it follows from \cref{thm:smoothing-property-of-projection} that
	$\GJMS \pi_{\lambda}$ is a compact self-adjoint operator acting on $L^{2}(M)$,
	and so $\Spec \GJMS \pi_{\lambda}$ is discrete except $0$.
	Moreover,
	$\Ker (\GJMS \pi_{\lambda} - \mu I)$ is a finite dimensional subspace of $C^{\infty}(M)$
	for any $\mu \neq 0$.
	These completes the proof.
\end{proof}

\begin{theorem}
\label{thm:spectral-projection-is-Hpsido}
	If $\lambda > 0$,
	then $\pi_{\lambda}$ and $N_{\lambda}$
	are Heisenberg pseudodifferential operators of order $0$ and $- 4$ respectively.
	Moreover,
	$\pi_{\lambda}$ and $N_{\lambda}$ coincide with $\Pi_{\infty}$ and $G_{\infty}$ respectively
	modulo $\Hpsido^{- \infty}(M)$.
\end{theorem}

\begin{proof}
	It follows from $N_{\lambda} \GJMS + \pi_{\lambda} = I$ that
	\begin{equation}
		N_{\lambda} \GJMS \Pi_{\infty} + \pi_{\lambda} \Pi_{\infty}
		= \Pi_{\infty}.
	\end{equation}
	Taking the adjoint yields that
	\begin{equation}
		\Pi_{\infty}^{\ast} \GJMS N_{\lambda} + \Pi_{\infty}^{\ast} \pi_{\lambda}
		= \Pi_{\infty}^{\ast}.
	\end{equation}
	Thus we have
	\begin{equation}
		(\Pi_{\infty}^{\ast} - \Pi_{\infty}^{\ast} \pi_{\lambda})
		(\Pi_{\infty} - \pi_{\lambda} \Pi_{\infty})
		= \Pi_{\infty}^{\ast} \GJMS N_{\lambda}^{2} \GJMS \Pi_{\infty},
	\end{equation}
	which is a smoothing operator.
	On the other hand,
	\begin{align}
		&(\Pi_{\infty}^{\ast} - \Pi_{\infty}^{\ast} \pi_{\lambda})
			(\Pi_{\infty} - \pi_{\lambda} \Pi_{\infty}) \\
		&= (\Pi_{\infty}^{\ast} + G_{\infty}^{\ast} \GJMS \pi_{\lambda} - \pi_{\lambda})
			(\Pi_{\infty} + P \pi_{\lambda} G_{\infty} - \pi_{\lambda}) \\
		&= \Pi_{\infty}^{\ast} \Pi_{\infty} + \Pi_{\infty}^{\ast} \GJMS \pi_{\lambda} G_{\infty}
			- \Pi_{\infty}^{\ast} \pi_{\lambda} + G_{\infty}^{\ast} \GJMS \pi_{\lambda} \Pi_{\infty} \\
		&\quad + G_{\infty}^{\ast} (\GJMS \pi_{\lambda})^{2} G_{\infty} - G_{\infty}^{\ast} \GJMS \pi_{\lambda}^{2}
			- \pi_{\lambda} \Pi_{\infty} + \GJMS \pi_{\lambda} G_{\infty} + \pi_{\lambda}^{2} \\
		&\sim \Pi_{\infty} - \Pi_{\infty}^{\ast} \pi_{\lambda} - \pi_{\lambda} \Pi_{\infty} + \pi_{\lambda} \\
		&= \Pi_{\infty} + G_{\infty}^{\ast} \GJMS \pi_{\lambda} - \pi_{\lambda}
			+ \GJMS \pi_{\lambda} G_{\infty} - \pi_{\lambda} + \pi_{\lambda} \\
		&\sim \Pi_{\infty} - \pi_{\lambda}.
	\end{align}
	In particular,
	$\pi_{\lambda}$ is a Heisenberg pseudodifferential operator of order $0$.

	Next consider $N_{\lambda}$.
	It follows from $\GJMS G_{\infty} + \Pi_{\infty} = I$ that
	\begin{equation}
		(I - \pi_{\lambda}) G_{\infty} + N_{\lambda} \Pi_{\infty} = N_{\lambda},
		\qquad
		G_{\infty}^{\ast} (I - \pi_{\lambda}) + \Pi_{\infty}^{\ast} N_{\lambda} = N_{\lambda}.
	\end{equation}
	Hence
	\begin{align}
		N_{\lambda} - G_{\infty}
		&= N_{\lambda} (\Pi_{\infty} - \pi_{\lambda}) - \pi_{\lambda} G_{\infty} \\
		&= (\Pi_{\infty}^{\ast} - \pi_{\lambda}) N_{\lambda} (\Pi_{\infty} - \pi_{\lambda})
			+ G_{\infty}^{\ast} (I - \pi_{\lambda}) (\Pi_{\infty} - \pi_{\lambda}) \\
		&\quad + (\Pi_{\infty} - \pi_{\lambda}) G_{\infty} - \Pi_{\infty} G_{\infty},
	\end{align}
	which is a smoothing operator.
	Therefore $N_{\lambda}$ is a Heisenberg pseudodifferential operator of order $- 4$.
\end{proof}

\section{CR Paneitz operator on the Rossi sphere}
\label{section:CR-Paneitz-operator-on-Rossi-spheres}

In this section,
we prove the existence of infinitely many negative eigenvalues
of the CR Paneitz operator on the Rossi sphere.
Our proof is inspired by that of \cite{Abbas-Brown-Ramasami-Zeytuncu2019}*{Theorem 5.7}.

\subsection{Definition of the Rossi sphere}
\label{subsection:definition-of-Rossi-spheres}

The unit sphere
\begin{equation}
	S^{3}
	\coloneqq \Set{(z, w) \in \bbC^{2} | \abs{z}^{2} + \abs{w}^{2} = 1}
\end{equation}
has the canonical CR structure $T^{1, 0} S^{3}$.
This CR structure is spanned by
\begin{equation}
	Z_{1}
	\coloneqq \ovw \frac{\del}{\del z} - \ovz \frac{\del}{\del w}.
\end{equation}
A canonical contact form $\theta$ on $S^{3}$ is given by
\begin{equation}
	\theta
	\coloneqq \frac{\sqrt{- 1}}{2} (z d \ovz + w d \ovw
		- \ovz d z - \ovw d w)|_{S^{3}}.
\end{equation}
The Reeb vector field $T$ with respect to $\theta$ is written as
\begin{equation}
	T
	= \sqrt{- 1} \rbra*{z \frac{\del}{\del z} + w \frac{\del}{\del w}
		- \ovz \frac{\del}{\del \ovz} - \ovw \frac{\del}{\del \ovw} }.
\end{equation}
The admissible coframe corresponding to $(T, Z_{1}, Z_{\ovone})$ is given by
\begin{equation}
	(\theta, \theta^{1} \coloneqq (w d z - z d w)|_{S^{3}},
	\theta^{\ovone} \coloneqq \overline{\theta^{1}}).
\end{equation}
For this coframe,
we have
\begin{equation}
	d \theta
	= \sqrt{-1} \theta^{1} \wedge \theta^{\ovone},
	\qquad
	d \theta^{1}
	= 2 \sqrt{- 1} \theta \wedge \theta^{1}.
\end{equation}
These imply
\begin{equation}
	l_{1 \ovone}
	= 1,
	\qquad
	A_{1 1}
	= 0,
	\qquad
	{\omega_{1}}^{1}
	= - 2 \sqrt{-1} \theta,
	\qquad
	\Scal
	= 2.
\end{equation}
In particular,
\begin{equation}
\label{eq:Kohn-Laplacian-on-standard-sphere}
	\Box_{b}
	= - l^{1 \ovone} (Z_{1} Z_{\ovone} - {\omega_{\ovone}}^{\ovone}(Z_{1}) Z_{\ovone})
	= - Z_{1} Z_{\ovone}.
\end{equation}

For a real number $0 < \abs{t} < 1$,
the \emph{Rossi sphere} $(S^{3}_{t}, T^{1, 0} S^{3}_{t})$ is defined by
\begin{equation}
	(S^{3}_{t}, T^{1, 0} S^{3}_{t})
	\coloneqq (S^{3}, \bbC (Z_{1} + t Z_{\ovone})).
\end{equation}
The complex vector field
\begin{equation}
	Z_{1}(t)
	\coloneqq Z_{1} + t Z_{\ovone}
\end{equation}
gives a global frame of $T^{1, 0} S^{3}_{t}$.
The admissible coframe with respect to
$(T, Z_{1}(t), Z_{\ovone}(t) \coloneqq \overline{Z_{1}(t)})$
is of the form
\begin{equation}
	(\theta,
	\theta^{1}(t)
	\coloneqq (1 - t^{2})^{- 1}(\theta^{1} - t \theta^{\ovone}),
	\theta^{\ovone}(t)
	\coloneqq \overline{\theta^{1}(t)}).
\end{equation}
For this coframe,
we have
\begin{gather}
	d \theta
	= \sqrt{- 1} (1 - t^{2}) \theta^{1}(t) \wedge \theta^{\ovone}(t), \\
	d \theta^{1}(t)
	= \sqrt{- 1} \frac{2(1 + t^{2})}{1 - t^{2}} \theta \wedge \theta^{1}(t)
		+ \sqrt{- 1} \frac{4 t}{1 - t^{2}} \theta \wedge \theta^{\ovone}(t).
\end{gather}
This implies
\begin{equation}
	l_{1 \ovone}(t)
	= 1 - t^{2},
	\qquad
	A^{1 1}(t)
	= \sqrt{- 1} \frac{4 t}{(1 - t^{2})^{2}},
	\qquad
	{\omega_{1}}^{1}(t)
	= - \sqrt{- 1} \frac{2 (1 + t^{2})}{1 - t^{2}} \theta;
\end{equation}
see~\cite{Chanillo-Chiu-Yang2012}*{Proposition 2.5} for example.

\begin{lemma}
\label{lem:Kohn-Laplacian-on-Rossi-sphere}
	The Kohn Laplacian $\Box_{b}(t)$ and $\calQ(t)$
	with respect to $(S^{3}_{t}, T^{1, 0} S^{3}_{t}, \theta)$
	satisfy
	\begin{gather}
		(1 - t^{2}) \Box_{b}(t)
		= \Box_{b} - t (Z_{1})^{2} - t (Z_{\ovone})^{2} + t^{2} \overline{\Box}_{b}, \\
		(1 - t^{2})^{2} \calQ(t)
		= 4 t (Z_{\ovone})^{2}  - 4 t^{2} (\Box_{b} + \overline{\Box}_{b}) + 4 t^{3} (Z_{1})^{2}.
	\end{gather}
\end{lemma}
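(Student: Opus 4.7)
The plan is to verify both identities by direct expansion of the tensorial definitions of $\Box_{b}(t)$ and $\calQ(t)$ in the admissible frame $(T, Z_{1}(t), Z_{\ovone}(t))$ adapted to the Rossi structure, and then substituting the pseudo-Hermitian data recorded just above. The observation that makes both computations short is that both $\tensor{\omega}{_{1}^{1}}(t)$ and its conjugate $\tensor{\omega}{_{\ovone}^{\ovone}}(t)$ are multiples of $\theta$, while $\theta$ annihilates both $Z_{1}(t)$ and $Z_{\ovone}(t)$. Hence every Tanaka-Webster connection term appearing when one covariantly differentiates a $(0,1)$-type object along a horizontal direction drops out, so the covariant derivative reduces to the plain action of the underlying vector field on the tensor component.

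For the first identity, expanding the definition \cref{eq:Kohn-Laplacian} in the $(t)$-frame gives $\Box_{b}(t) u = -\tensor{l}{^{1\ovone}}(t) \bigl( Z_{1}(t) Z_{\ovone}(t) u - \tensor{\omega}{_{\ovone}^{\ovone}}(t)(Z_{1}(t))\, Z_{\ovone}(t) u \bigr)$. By the observation above, the connection term vanishes, so $(1 - t^{2}) \Box_{b}(t) u = -Z_{1}(t) Z_{\ovone}(t) u$. Expanding the composition $(Z_{1} + t Z_{\ovone})(Z_{\ovone} + t Z_{1})$ and substituting the standard-sphere identifications $\Box_{b} = -Z_{1} Z_{\ovone}$ from \cref{eq:Kohn-Laplacian-on-standard-sphere} together with its conjugate $\overline{\Box}_{b} = -Z_{\ovone} Z_{1}$ yields the first formula.

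For the second identity, I first determine $\tensor{A}{^{\ovone\ovone}}(t)$ from $\tensor{A}{^{1}^{1}}(t)$ by lowering both indices with $\tensor{l}{_{1}_{\ovone}}(t) = 1 - t^{2}$, conjugating via $\tensor{A}{_{1}_{1}} = \overline{\tensor{A}{_{\ovone}_{\ovone}}}$, and then raising the indices again with $\tensor{l}{^{1\ovone}}(t)$; this gives the constant $\tensor{A}{^{\ovone\ovone}}(t) = -4\sqrt{-1}\, t/(1 - t^{2})^{2}$. Since the horizontal connection term again vanishes, the defining expression $\calQ(t) u = \sqrt{-1}\, \bigl( \tensor{A}{^{\ovone\ovone}}(t)\, Z_{\ovone}(t) u \bigr)_{,\ovone}$ collapses to $\sqrt{-1}\, \tensor{A}{^{\ovone\ovone}}(t)\, Z_{\ovone}(t)^{2} u$. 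Squaring $Z_{\ovone}(t) = Z_{\ovone} + t Z_{1}$, substituting $Z_{1} Z_{\ovone} = -\Box_{b}$ and $Z_{\ovone} Z_{1} = -\overline{\Box}_{b}$, and multiplying by $(1 - t^{2})^{2}$ produces the second formula. The only delicate point in the whole argument is the index bookkeeping needed to pass between the raised and lowered versions of the torsion tensor; once this is settled, both identities reduce to straightforward algebraic expansion.
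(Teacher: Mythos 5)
Your proof is correct and follows essentially the same route as the paper: expand the tensorial definitions in the frame $(T, Z_{1}(t), Z_{\ovone}(t))$, use that the connection form is a multiple of $\theta$ so the horizontal connection terms vanish, and then expand $Z_{1}(t)Z_{\ovone}(t)$ and $(Z_{\ovone}(t))^{2}$ using $\Box_{b} = -Z_{1}Z_{\ovone}$ and $\overline{\Box}_{b} = -Z_{\ovone}Z_{1}$. Your explicit index computation giving $\tensor{A}{^{\ovone}^{\ovone}}(t) = -4\sqrt{-1}\,t/(1-t^{2})^{2}$ (the conjugate of the stated $\tensor{A}{^{1}^{1}}(t)$) is also consistent with what the paper uses implicitly.
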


\begin{proof}
	It follows from the definition of the Kohn Laplacian and \cref{eq:Kohn-Laplacian-on-standard-sphere} that
	\begin{align}
		(1 - t^{2}) \Box_{b}(t)
		&= - (1 - t^{2}) l^{1 \ovone}(t)
				\rbra*{Z_{1}(t) Z_{\ovone}(t)
				- {\omega_{\ovone}}^{\ovone}(t) (Z_{1}(t)) Z_{\ovone}(t)} \\
		&= - Z_{1}(t) Z_{\ovone}(t) \\
		&= \Box_{b} - t (Z_{1})^{2} - t (Z_{\ovone})^{2} + t^{2} \overline{\Box}_{b}.
	\end{align}
	Similarly,
	$(1 - t^{2})^{2} \calQ(t)$ is given by
	\begin{align}
		(1 - t^{2})^{2} \calQ(t)
		&= \sqrt{- 1} (1 - t^{2})^{2} \rbra*{Z_{\ovone}(t)
			+ {\omega_{\ovone}}^{\ovone}(t)(Z_{\ovone}(t))}
				\rbra*{A^{\ovone \ovone}(t) Z_{\ovone}(t)} \\
		&= 4 t (Z_{\ovone}(t))^{2} \\
		&= 4 t (Z_{\ovone})^{2}  - 4 t^{2} (\Box_{b} + \overline{\Box}_{b}) + 4t ^{3} (Z_{1})^{2},
	\end{align}
	which completes the proof.
\end{proof}

\subsection{Spherical harmonics}
\label{subsection:spherical-harmonics}

In this subsection,
we recall some facts on spherical harmonics,
which give a good orthogonal decomposition of $L^{2}(S^{3})$.
We denote by $\scrP_{p, q}(\bbC^{2})$
the space of complex homogeneous polynomials of bidegree $(p, q)$
and by $\scrH_{p, q}(\bbC^{2})$ the space of harmonic $f \in \scrP_{p, q}(\bbC^{2})$.
The restriction map
\begin{equation}
	|_{S^{3}} \colon \scrH_{p, q}(\bbC^{2}) \to C^{\infty}(S^{3})
\end{equation}
is injective since any $f \in \scrH_{p,q}$ is harmonic.
The image of $\scrH_{p, q}(\bbC^{2})$ under this map
is written as $\scrH_{p, q}(S^{3})$.

\begin{lemma}[c.f.\ \cite{Abbas-Brown-Ramasami-Zeytuncu2019}*{Propositions 2.1 and 2.2}]
\label{lem:orthogonal-decomposition-of-L^2}
	The Hilbert space $L^{2}(S^{3})$ has the following orthogonal decomposition:
	\begin{equation}
		L^{2}(S^{3})
		= \bigoplus_{p, q} \scrH_{p, q}(S^{3}).
	\end{equation}
	Moreover,
	$\dim \scrH_{p, q}(S^{3}) = p + q + 1$;
	in particular,
	$\scrH_{p, q}(S^{3})$ contains a non-zero element.
\end{lemma}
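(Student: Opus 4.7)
The plan is to follow the classical theory of bigraded spherical harmonics on $S^{2n - 1} \subset \bbC^{n}$ specialized to $n = 2$. First I would establish density: the $\ast$-algebra $\bbC[z, \ovz, w, \ovw]|_{S^{3}}$ separates points of $S^{3}$, is closed under conjugation, and contains the constants, so Stone--Weierstrass yields density in $C(S^{3})$ and hence in $L^{2}(S^{3})$. It therefore suffices to decompose the restriction of each bigraded piece $\scrP_{p, q}(\bbC^{2})|_{S^{3}}$.

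Next I would prove the Fischer decomposition
\begin{equation*}
	\scrP_{p, q}(\bbC^{2})
	= \scrH_{p, q}(\bbC^{2}) \oplus r^{2} \scrP_{p - 1, q - 1}(\bbC^{2}),
	\qquad r^{2} \coloneqq z \ovz + w \ovw.
\end{equation*}
Equipping polynomials with the Fock-type inner product (in which $\del / \del \ovz$ is adjoint to multiplication by $z$), multiplication by $r^{2}$ becomes adjoint to the Laplacian $\Delta = \del^{2} / \del z \del \ovz + \del^{2} / \del w \del \ovw$; hence the orthogonal complement of $r^{2} \scrP_{p - 1, q - 1}$ inside $\scrP_{p, q}$ is precisely $\Ker \Delta \cap \scrP_{p, q} = \scrH_{p, q}(\bbC^{2})$. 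Iterating the splitting and using $r^{2} \equiv 1$ on $S^{3}$ gives $\scrP_{p, q}(\bbC^{2})|_{S^{3}} = \sum_{k = 0}^{\min(p, q)} \scrH_{p - k, q - k}(S^{3})$, so $\sum_{p, q} \scrH_{p, q}(S^{3})$ is dense in $L^{2}(S^{3})$.

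For orthogonality between $\scrH_{p, q}(S^{3})$ and $\scrH_{p', q'}(S^{3})$ with $(p, q) \neq (p', q')$, I would exploit the $U(2)$-invariance of the volume form $\theta \wedge d \theta$. When $p - q \neq p' - q'$, the diagonal $U(1)$-action $(z, w) \mapsto (e^{\sqrt{- 1} \alpha} z, e^{\sqrt{- 1} \alpha} w)$ acts by distinct characters $e^{\sqrt{- 1} (p - q) \alpha}$ and $e^{\sqrt{- 1} (p' - q') \alpha}$, and Fourier integration in $\alpha$ forces orthogonality. When $p - q = p' - q'$ but $p + q \neq p' + q'$, the two spaces lie inside round-sphere harmonic spaces of distinct total degrees, which are eigenspaces of the self-adjoint round Laplacian $\Delta_{S^{3}}$ with distinct eigenvalues $N(N + 2)$; self-adjointness then delivers the orthogonality.

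Finally, for the dimension, counting monomials $z^{a} \ovz^{b} w^{c} \ovw^{d}$ with $a + c = p$, $b + d = q$ gives $\dim \scrP_{p, q}(\bbC^{2}) = (p + 1)(q + 1)$, so the Fischer decomposition forces $\dim \scrH_{p, q}(\bbC^{2}) = (p + 1)(q + 1) - p q = p + q + 1$; injectivity of the restriction map (already noted in the excerpt) transfers this to $\dim \scrH_{p, q}(S^{3}) = p + q + 1 \geq 1$. The one genuinely nontrivial step is the Fischer decomposition, whose content is the Fock-adjoint identity between $r^{2}$ and $\Delta$; once that is in place, density, cross-bidegree orthogonality, and the dimension count are routine.
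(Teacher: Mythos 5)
Your argument is correct. Note, though, that the paper itself offers no proof of this lemma: it is quoted from \cite{Abbas-Brown-Ramasami-Zeytuncu2019}*{Propositions 2.1 and 2.2}, so your contribution is a self-contained derivation of the cited classical facts rather than an alternative to an argument in the text. The route you take (Stone--Weierstrass for density, the Fischer decomposition $\scrP_{p,q}(\bbC^{2}) = \scrH_{p,q}(\bbC^{2}) \oplus r^{2}\scrP_{p-1,q-1}(\bbC^{2})$ obtained from the adjointness of multiplication by $r^{2}$ and $\Delta$ with respect to the Fischer/Fock pairing, cross-bidegree orthogonality from the diagonal $U(1)$-characters together with the eigenvalue $N(N+2)$ of the round Laplacian, and the count $\dim\scrH_{p,q} = (p+1)(q+1) - pq = p+q+1$) is exactly the standard theory of bigraded spherical harmonics that underlies the cited propositions, so the two approaches buy the same thing; yours makes the paper independent of the reference at the cost of about a page. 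Two small points you are implicitly using and should state: the volume form $\theta \wedge d\theta$ is a constant multiple of the round volume form on $S^{3}$ (both are $U(2)$-invariant), which is what legitimizes the character-integration and the self-adjointness of $\Delta_{S^{3}}$ in your orthogonality step; and the dimension count needs injectivity of multiplication by $r^{2}$ on $\scrP_{p-1,q-1}(\bbC^{2})$, which is immediate since the polynomial ring is a domain. Also, be slightly careful with the adjointness convention: depending on how conjugation is built into the Fischer inner product, multiplication by $z$ is adjoint either to $\del/\del z$ or to $\del/\del\ovz$, but in either convention multiplication by $r^{2} = z\ovz + w\ovw$ is adjoint to a positive multiple of $\Delta$, which is all the decomposition requires.
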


Moreover,
any $f \in \scrH_{p, q}(S^{3})$ is a simultaneous eigenfunction of $\Box_{b}$ and $\overline{\Box}_{b}$.

\begin{lemma}[c.f.\ \cite{Abbas-Brown-Ramasami-Zeytuncu2019}*{Theorem 2.6}]
\label{lem:eigenvalue-of-Kohn-Laplacian}
	For any $f \in \scrH_{p, q}(S^{3})$,
	one has
	\begin{equation}
		\Box_{b} f
		= (p + 1) q f,
		\qquad
		\overline{\Box}_{b} f
		= p (q + 1) f.
	\end{equation}
\end{lemma}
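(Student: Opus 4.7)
The plan is to reduce both eigenvalue equations to a single test-function computation via Schur's lemma. I would first observe that $\Box_b$ and $\overline{\Box}_b$ commute with the natural $U(2)$-action on $S^3$, since this action preserves both the CR structure $T^{1,0}S^3$ and the contact form $\theta$. The space $\scrH_{p,q}(\bbC^2)$ is the classical irreducible representation of $U(2)$ of bidegree $(p,q)$, and the restriction map realizes a $U(2)$-equivariant isomorphism onto $\scrH_{p,q}(S^3)$. Consequently, both operators act as scalars on each $\scrH_{p,q}(S^3)$, and it will suffice to compute these scalars on any convenient nonzero element.

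As the test function I would take $f_0 \coloneqq z^p \ovw^q$, which is manifestly harmonic on $\bbC^2$ and of bidegree $(p,q)$, hence $f_0|_{S^3}$ represents a nonzero element of $\scrH_{p,q}(S^3)$. Using $\Box_b = - Z_1 Z_{\ovone}$ from \cref{eq:Kohn-Laplacian-on-standard-sphere}, a short direct calculation gives
\begin{equation*}
	Z_{\ovone} f_0 = (w \del_{\ovz} - z \del_{\ovw}) f_0 = - q z^{p+1} \ovw^{q-1}
\end{equation*}
and then
\begin{equation*}
	Z_1 Z_{\ovone} f_0 = (\ovw \del_z - \ovz \del_w)(- q z^{p+1} \ovw^{q-1}) = - q(p+1) z^p \ovw^q,
\end{equation*}
so that $\Box_b f_0 = (p+1) q f_0$, establishing the first formula.

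For the second formula I would use \cref{eq:complex-conjugate-of-Kohn-Laplacian} to rewrite $\overline{\Box}_b = \Box_b - \sqrt{-1} T$. Since $f_0$ has bidegree $(p,q)$, the explicit expression for $T$ yields $T f_0 = \sqrt{-1}(p - q) f_0$, so
\begin{equation*}
	\overline{\Box}_b f_0 = \bigl( (p+1) q + (p - q) \bigr) f_0 = p(q+1) f_0.
\end{equation*}
By the scalar action on the irreducible summand, both formulas then extend to all of $\scrH_{p,q}(S^3)$.

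There is no serious obstacle; the computation is elementary, and the only nontrivial ingredient is the irreducibility of $\scrH_{p,q}(\bbC^2)$ as a $U(2)$-module, which is classical. An alternative route would be to verify the eigenvalue equation on a general harmonic polynomial by expanding $Z_1 Z_{\ovone}$ in Euclidean coordinates and invoking $|z|^2 + |w|^2 = 1$ together with harmonicity, but this is considerably more computational, so the representation-theoretic reduction is preferable.
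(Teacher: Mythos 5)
Your proof is correct. The paper itself does not prove this lemma but defers to \cite{Abbas-Brown-Ramasami-Zeytuncu2019}*{Theorem 2.6}, where the eigenvalues are obtained by direct computation on harmonic polynomials; your representation-theoretic shortcut via Schur's lemma is a cleaner route to the same result. One point that you pass over slightly quickly: to invoke Schur's lemma on $\scrH_{p,q}(S^3)$ you need to know that $\Box_b$ actually maps $\scrH_{p,q}(S^3)$ into itself. This does follow from what you say, since the $\scrH_{p,q}$ are pairwise non-isomorphic $U(2)$-modules (the $SU(2)$-type $p+q+1$ and the central character $p-q$ together determine $(p,q)$), so a $U(2)$-equivariant operator on $C^{\infty}(S^{3})$ automatically preserves each isotypic piece; alternatively it is the content of the lemma immediately following this one in the paper, namely that $Z_{1}, Z_{\ovone}$ commute with the Euclidean Laplacian and shift bidegree by $(\mp 1, \pm 1)$, so $\Box_b = -Z_1 Z_{\ovone}$ preserves $\scrH_{p,q}$. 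It would be worth making this explicit. The test-function calculation with $f_0 = z^p\ovw^q$ is correct (I verified $Z_{\ovone}f_0 = -q z^{p+1}\ovw^{q-1}$, $Z_1 Z_{\ovone}f_0 = -(p+1)q f_0$, and $T f_0 = \sqrt{-1}(p-q)f_0$), and using $\overline{\Box}_b = \Box_b - \sqrt{-1}T$ to get the second eigenvalue for free is a nice economy over redoing a second-order computation.
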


Furthermore,
$Z_{1}$ and $Z_{\ovone}$ change the bidegree.

\begin{lemma}
	The vector fields $Z_{1}$ and $Z_{\ovone}$ map
	$\scrH_{p, q}(S^{3})$ to $\scrH_{p - 1, q + 1}(S^{3})$ and $\scrH_{p + 1, q - 1}(S^{3})$ respectively.
\end{lemma}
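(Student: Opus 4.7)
The plan is to lift the question from $S^{3}$ to polynomials on $\bbC^{2}$ via the restriction isomorphism $\scrH_{p,q}(\bbC^{2}) \cong \scrH_{p,q}(S^{3})$ and to exploit the fact that $Z_{1}$ commutes with the Euclidean Laplacian on $\bbC^{2}$. A direct check gives $Z_{1}(\abs{z}^{2} + \abs{w}^{2}) = \ovw \ovz - \ovz \ovw = 0$, so $Z_{1}$ is tangent to $S^{3}$; consequently, for any polynomial representative $f \in \scrH_{p,q}(\bbC^{2})$ of an element of $\scrH_{p,q}(S^{3})$, the function $Z_{1} f$ computed on $\bbC^{2}$ restricts to $Z_{1}(f|_{S^{3}})$ on $S^{3}$.

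Next, a bidegree count on $Z_{1} = \ovw \del_{z} - \ovz \del_{w}$ shows that each summand lowers the holomorphic bidegree by one and raises the antiholomorphic bidegree by one, so $Z_{1}$ maps $\scrP_{p,q}(\bbC^{2})$ into $\scrP_{p-1, q+1}(\bbC^{2})$. Symmetrically, $Z_{\ovone} = w \del_{\ovz} - z \del_{\ovw}$ maps $\scrP_{p,q}(\bbC^{2})$ into $\scrP_{p+1, q-1}(\bbC^{2})$.

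The main step is to verify $\comm{\Delta}{Z_{1}} = 0$, where $\Delta = 4(\del_{z}\del_{\ovz} + \del_{w}\del_{\ovw})$ is the Euclidean Laplacian. Of the four cross commutators only two contribute: $\comm{\del_{w}\del_{\ovw}}{\ovw \del_{z}} = \del_{w}\del_{z}$, via $\comm{\del_{\ovw}}{\ovw} = 1$, and symmetrically $\comm{\del_{z}\del_{\ovz}}{-\ovz \del_{w}} = -\del_{z}\del_{w}$; the other two vanish because $\del_{\ovz}\ovw = \del_{\ovw}\ovz = 0$. These contributions cancel, so $Z_{1}$ preserves harmonicity of polynomials, and combined with the bidegree count this yields $Z_{1} f \in \scrH_{p-1, q+1}(\bbC^{2})$ whenever $f \in \scrH_{p, q}(\bbC^{2})$. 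Restricting to $S^{3}$ and invoking tangency produces the desired element of $\scrH_{p-1, q+1}(S^{3})$, and the claim for $Z_{\ovone}$ then follows by taking complex conjugates. The only substantive step is the commutator calculation, which is elementary but demands some care with signs.
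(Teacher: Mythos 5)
Your proof is correct and follows essentially the same route as the paper: the bidegree count showing $Z_{1}\colon \scrP_{p,q}(\bbC^{2}) \to \scrP_{p-1,q+1}(\bbC^{2})$ and the vanishing commutator $\comm{\Delta}{Z_{1}} = 0$, which together preserve harmonicity. You additionally spell out the tangency check $Z_{1}(\abs{z}^{2} + \abs{w}^{2}) = 0$, which the paper leaves implicit but which properly justifies identifying the ambient operator $Z_{1}$ on $\bbC^{2}$ with the CR vector field $Z_{1}$ acting on restrictions to $S^{3}$.
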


\begin{proof}
	It follows from the definition of $Z_{1}$ and $Z_{\ovone}$ that
	\begin{equation}
		Z_{1} \colon \scrP_{p, q}(\bbC^{2}) \to \scrP_{p - 1, q + 1}(\bbC^{2}),
		\qquad
		Z_{\ovone} \colon \scrP_{p, q}(\bbC^{2}) \to \scrP_{p + 1, q - 1}(\bbC^{2}).
	\end{equation}
	Moreover,
	one can check that $\comm{\Delta}{Z_{1}} = \comm{\Delta}{Z_{\ovone}} = 0$,
	where $\Delta$ is the Euclidean Laplacian on $\bbC^{2}$.
	Hence $Z_{1}$ and $Z_{\ovone}$ map harmonic functions to those.
\end{proof}

\subsection{Negative eigenvalues of the CR Paneitz operator}

Set $\calP(t) \coloneqq (1 - t^{2})^{2} P(t)$,
where $P(t)$ is the CR Paneitz operator with respect to $(S^{3}_{t}, T^{1,0}S^{3}_{t}, \theta)$.
It suffices to show that $\calP(t)$ has infinitely many negative eigenvalues.

Fix a positive integer $k$
and take $v_{1} \in \scrH_{2 k - 1, 0}(S^{3})$ with $\norm{v_{1}}_{L^{2}} = 1$;
the existence of such $v_{1}$ follows from \cref{lem:orthogonal-decomposition-of-L^2}.
We set
\begin{equation}
	c_{k}(l)
	\coloneqq (l - 2) (2 k - l + 2)
\end{equation}
and
\begin{equation}
	v_{i}
	\coloneqq \rbra*{\prod_{l = 1}^{i - 1} \frac{1}{\sqrt{c_{k}(2 l + 1) c_{k}(2 l + 2)}}}
		(Z_{1})^{2 i - 2} v_{1}
		\in \scrH_{2 k - 2 i + 1, 2 i - 2}
\end{equation}
for each integer $2 \leq i \leq k$.
Note that
\begin{equation}
\label{eq:relation-of-coefficient}
	c_{k}(l) + c_{k}(l + 3)
	= c_{k}(l + 1) + c_{k}(l + 2) - 4.
\end{equation}
In what follows,
we use the convention $v_{i} = 0$ for $i \leq 0$ or $i \geq k + 1$.

\begin{lemma}
	The functions $(v_{i})$ satisfy
	\begin{gather}
		\Box_{b} v_{i}
		= c_{k}(2 i) v_{i},
		\qquad
		\overline{\Box}_{b} v_{i}
		= c_{k}(2 i + 1) v_{i}, \\
		(Z_{1})^{2} v_{i}
		= \sqrt{c_{k}(2 i + 1) c_{k}(2 i + 2)} v_{i + 1},
		\qquad
		(Z_{\ovone})^{2} v_{i}
		= \sqrt{c_{k}(2 i - 1) c_{k}(2 i)} v_{i - 1}.
	\end{gather}
\end{lemma}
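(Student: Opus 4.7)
The four identities split naturally into three cases. The two eigenvalue relations for $\Box_b$ and $\overline{\Box}_b$ are immediate from \cref{lem:eigenvalue-of-Kohn-Laplacian}: since $v_i \in \scrH_{2k-2i+1,\, 2i-2}(S^3)$, taking $p = 2k-2i+1$ and $q = 2i-2$ yields $(p+1)q = (2k-2i+2)(2i-2) = c_k(2i)$ and $p(q+1) = (2k-2i+1)(2i-1) = c_k(2i+1)$. The raising identity $(Z_1)^2 v_i = \sqrt{c_k(2i+1) c_k(2i+2)}\, v_{i+1}$ is then tautological from the recursive definition of $v_{i+1}$; at the boundary $i = k$ both sides vanish, since $c_k(2k+2) = 0$ and $(Z_1)^2 v_k$ lies in $\scrH_{-1, 2k}(S^3) = 0$.

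The genuine work lies in the lowering identity $(Z_{\ovone})^2 v_i = \sqrt{c_k(2i-1) c_k(2i)}\, v_{i-1}$, which is not built into the definition of $v_i$. My plan is to first establish, by induction on $n$, the commutator formula
\begin{equation*}
	Z_{\ovone}(Z_1)^n u = (Z_1)^n Z_{\ovone} u + n(n - 1 - p + q)(Z_1)^{n-1} u
\end{equation*}
for any $u \in \scrH_{p,q}(S^3)$. The two ingredients are the commutation relation $[Z_{\ovone}, Z_1] = \sqrt{-1}\, T$, which on $S^3$ follows from \cref{eq:commutator-of-covariant-derivatives} together with $\tensor{l}{_{1}_{\ovone}} = 1$, $\tensor{A}{_{1}_{1}} = 0$, and $\tensor{\omega}{_{1}^{1}}(Z_{\ovone}) = 0$, and the observation that $T$ acts on $\scrH_{p,q}(S^3)$ as the scalar $\sqrt{-1}(p - q)$ by weight considerations applied to the monomials $z^a \ovz^b w^c \ovw^d$.

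Applying this formula to $u = v_1 \in \scrH_{2k-1, 0}(S^3)$ and using $Z_{\ovone} v_1 = 0$ (since $\scrH_{2k, -1}(S^3) = 0$) gives $Z_{\ovone}(Z_1)^n v_1 = n(n - 2k)(Z_1)^{n-1} v_1$; iterating once more yields
\begin{equation*}
	(Z_{\ovone})^2 (Z_1)^{2i} v_1 = 2i(2i - 1)(2i - 2k)(2i - 2k - 1)\, (Z_1)^{2i-2} v_1.
\end{equation*}
The coefficient rewrites as $c_k(2i+1) c_k(2i+2)$ after noting $(2i-2k)(2i-2k-1) = (2k-2i+1)(2k-2i)$. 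Feeding this back into the definitions of $v_{i+1}$ and $v_i$ and relabeling $i \mapsto i-1$ produces the claimed lowering identity. The only real obstacle is bookkeeping: keeping the combinatorics of the commutator formula straight across the iteration.
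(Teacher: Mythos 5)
Your argument is correct, and for the three easy identities (the two eigenvalue relations and the raising identity) it coincides with the paper's proof. For the lowering identity, however, you take a genuinely different route. The paper exploits the factorization $\Box_{b} = -Z_{1} Z_{\ovone}$, hence $\overline{\Box}_{b} = -Z_{\ovone} Z_{1}$, on the round sphere (\cref{eq:Kohn-Laplacian-on-standard-sphere}): writing $(Z_{\ovone})^{2}(Z_{1})^{2} v_{i-1} = -Z_{\ovone}\overline{\Box}_{b}(Z_{1} v_{i-1})$, it applies \cref{lem:eigenvalue-of-Kohn-Laplacian} to $Z_{1} v_{i-1} \in \scrH_{2k-2i+2,\,2i-3}(S^{3})$ and then once more to $v_{i-1}$ via $-Z_{\ovone}Z_{1} = \overline{\Box}_{b}$, which settles the identity in two lines without any commutator computation. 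You instead derive the ladder formula $Z_{\ovone}(Z_{1})^{n} u = (Z_{1})^{n} Z_{\ovone} u + n(n-1-p+q)(Z_{1})^{n-1} u$ on $\scrH_{p,q}(S^{3})$ from $\comm{Z_{\ovone}}{Z_{1}} = \sqrt{-1}\,T$ and the scalar action $T|_{\scrH_{p,q}} = \sqrt{-1}(p-q)$, then iterate from $v_{1}$ using $Z_{\ovone} v_{1} = 0$; I checked the induction, the coefficient $2i(2i-1)(2i-2k)(2i-2k-1) = c_{k}(2i+1)c_{k}(2i+2)$, and the final normalization, and everything is sound (including the boundary cases $i=1$ and $i=k$, which the convention $v_{0} = v_{k+1} = 0$ handles). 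What your approach buys is an explicit closed-form ladder identity exhibiting the underlying $\mathfrak{su}(2)$-type representation structure, applicable to any starting vector in $\scrH_{p,q}(S^{3})$; what it costs is an extra induction and the bookkeeping you mention, whereas the paper's argument needs only \cref{lem:eigenvalue-of-Kohn-Laplacian} together with the factorization of the Kohn Laplacian and is essentially computation-free.
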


\begin{proof}
	Since $v_{i} \in \scrH_{2 k - 2 i + 1, 2 i - 2}$,
	\cref{lem:eigenvalue-of-Kohn-Laplacian} implies the first and second equalities.
	The third one easily follows from the definition of $v_{i}$.
	Moreover,
	if $i \geq 2$,
	then
	\begin{align}
		(Z_{\ovone})^{2} v_{i}
		&= \frac{1}{\sqrt{c_{k}(2 i - 1) c_{k}(2 i)}} (Z_{\ovone})^{2} (Z_{1})^{2} v_{i - 1} \\
		&= - \frac{1}{\sqrt{c_{k}(2 i - 1) c_{k}(2 i)}} Z_{\ovone} \overline{\Box}_{b} (Z_{1} v_{i - 1}) \\
		&= \frac{\sqrt{c_{k}(2 i)}}{\sqrt{c_{k}(2 i - 1)}} \overline{\Box}_{b} v_{i - 1} 
			\qquad \text{($\because$ $Z_{1} v_{i - 1} \in \scrH_{2 k - 2 i + 2, 2 i - 3}$)} \\
		&= \sqrt{c_{k}(2 i - 1) c_{k}(2 i)} v_{i - 1},
	\end{align}
	which completes the proof.
\end{proof}

Let $V_{k}$ be the subspace of $L^{2}(S^{3})$
spanned by $(v_{i})_{i = 1}^{k}$.
This space is closed under $\Box_{b}$, $\overline{\Box}_{b}$, $(Z_{1})^{2}$, and $(Z_{\ovone})^{2}$.

\begin{lemma}
	The family $(v_{i})_{i = 1}^{k}$ is an orthonormal basis of $V_{k}$.
\end{lemma}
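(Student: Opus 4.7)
The plan is to treat orthogonality and normalization separately. For orthogonality, it suffices to observe that $v_{i}\in\scrH_{2k-2i+1,2i-2}(S^{3})$ and that, as $i$ ranges over $1,\dots,k$, these bidegrees are pairwise distinct; the orthogonal decomposition of \cref{lem:orthogonal-decomposition-of-L^2} then yields $v_{i}\perp v_{j}$ in $L^{2}(S^{3})$ for $i\neq j$ automatically.

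For normalization I would argue by induction on $i$, with the base case $\norm{v_{1}}_{L^{2}}=1$ being the choice of $v_{1}$. The inductive step rests on the formal adjoint identity $(Z_{1})^{\ast}=-Z_{\ovone}$ on $L^{2}(S^{3},\theta\wedge d\theta)$; equivalently, $((Z_{1})^{2})^{\ast}=(Z_{\ovone})^{2}$. I would verify this by showing that $Z_{1}$ is divergence-free with respect to $\mu\coloneqq\theta\wedge d\theta$. Cartan's formula together with $\theta(Z_{1})=0$ gives $\mathcal{L}_{Z_{1}}\theta=\sqrt{-1}\,\theta^{\ovone}$, and combining this with $d\theta^{\ovone}=-2\sqrt{-1}\,\theta\wedge\theta^{\ovone}$ a short differential-form computation yields $\mathcal{L}_{Z_{1}}\mu=0$. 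Standard integration by parts then delivers the adjoint identity.

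Granting this, the inductive step is mechanical. Writing $v_{i+1}=(c_{k}(2i+1)c_{k}(2i+2))^{-1/2}(Z_{1})^{2}v_{i}$ and using the adjoint identity gives
\begin{equation*}
  c_{k}(2i+1)c_{k}(2i+2)\norm{v_{i+1}}_{L^{2}}^{2}
  =\iproduct{v_{i}}{(Z_{\ovone})^{2}(Z_{1})^{2}v_{i}}_{L^{2}}.
\end{equation*}
Applying the formulas of the preceding lemma twice---first $(Z_{1})^{2}v_{i}=\sqrt{c_{k}(2i+1)c_{k}(2i+2)}\,v_{i+1}$, then $(Z_{\ovone})^{2}v_{i+1}=\sqrt{c_{k}(2i+1)c_{k}(2i+2)}\,v_{i}$---evaluates the right-hand side to $c_{k}(2i+1)c_{k}(2i+2)\norm{v_{i}}_{L^{2}}^{2}$, so $\norm{v_{i+1}}_{L^{2}}=\norm{v_{i}}_{L^{2}}=1$, closing the induction.

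The only genuine obstacle is the adjoint identity, and it is not deep: it reduces to the concrete identity $\mathcal{L}_{Z_{1}}\mu=0$ in the explicit data already recorded for the Rossi sphere. Every other step is an immediate consequence of the preceding lemma or of \cref{lem:orthogonal-decomposition-of-L^2}.
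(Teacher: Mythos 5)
Your proof is correct and follows essentially the same route as the paper: orthogonality comes for free from the distinct bidegrees and \cref{lem:orthogonal-decomposition-of-L^2}, and normalization is proved by induction via integration by parts together with the formulas $(Z_{1})^{2}v_{i}=\sqrt{c_{k}(2i+1)c_{k}(2i+2)}\,v_{i+1}$ and $(Z_{\ovone})^{2}v_{i+1}=\sqrt{c_{k}(2i+1)c_{k}(2i+2)}\,v_{i}$. The only difference is that you spell out the adjoint identity $((Z_{1})^{2})^{\ast}=(Z_{\ovone})^{2}$ by checking $\mathcal{L}_{Z_{1}}(\theta\wedge d\theta)=0$, a step the paper leaves implicit in its ``integration by parts'' line; your verification of it is correct.
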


\begin{proof}
	Since $v_{i} \in \scrH_{2 k - 2 i + 1, 2 i - 2}(S^{3})$,
	it is sufficient to show $\norm{v_{i}}_{L^{2}} = 1$ for $1 \leq i \leq k$.
	We prove this by induction.
	It follows from the choice of $v_{1}$ that $\norm{v_{1}}_{L^{2}} = 1$.
	Assume that $\norm{v_{i}}_{L^{2}} = 1$ for some $1 \leq i \leq k - 1$.
	Then the integration by parts gives
	\begin{align}
		\norm{v_{i + 1}}_{L^{2}}^{2}
		&= \frac{1}{\sqrt{c_{k}(2 i + 1) c_{k}(2 i + 2)}} \int_{S^{3}} v_{i + 1}
				\rbra*{\overline{(Z_{1})^{2} v_{i}}} \, \theta \wedge d \theta \\
		&= \frac{1}{\sqrt{c_{k}(2 i + 1) c_{k}(2 i + 2)}} \int_{S^{3}} \rbra*{(Z_{\ovone})^{2} v_{i + 1}}
				\overline{v_{i}} \, \theta \wedge d \theta \\
		&= \int_{S^{3}} \abs{v_{i}}^{2} \, \theta \wedge d \theta \\
		&= 1,
	\end{align}
	which completes the proof.
\end{proof}

It follows from the definition of the CR Paneitz operator
and \cref{lem:Kohn-Laplacian-on-Rossi-sphere} that
\begin{align}
	&\calP(t) v_{i} \\
	&= t^{2} \sqrt{c_{k}(2 i - 3) c_{k}(2 i - 2) c_{k}(2 i - 1) c_{k}(2 i)} v_{i - 2} \\
	&\quad - t (1 + t^{2}) (c_{k}(2 i - 2) + c_{k}(2 i + 1)) \sqrt{c_{k}(2 i - 1) c_{k}(2 i)} v_{i - 1} \\
	&\quad + \sbra*{(1 + t^{2})^{2} c_{k}(2 i) c_{k}(2 i + 1)
		+ t^{2} (c_{k}(2 i - 2) c_{k}(2 i) + c_{k}(2 i + 1) c_{k}(2 i + 3))} v_{i} \\
	&\quad - t (1 + t^{2}) (c_{k}(2 i) + c_{k}(2 i + 3)) \sqrt{c_{k}(2 i + 1) c_{k}(2 i + 2)} v_{i + 1} \\
	&\quad + t^{2} \sqrt{c_{k}(2 i + 1) c_{k}(2 i + 2) c_{k}(2 i + 3) c_{k}(2 i + 4)}v_{i + 2};
\end{align}
here we use the equality \cref{eq:relation-of-coefficient}.
In particular,
this yields that
$\calP(t)$ maps $V_{k}$ to itself.
We would like to show $\calP(t)|_{V_{k}}$ has exactly one negative eigenvalue.
We denote by $\calP_{k}(t)$
the matrix representation of $\calP(t)|_{V_{k}}$ with respect to $(v_{i})_{i = 1}^{k}$.
Note that $\calP_{k}(t)$ is a $k \times k$ Hermitian matrix
since $\calP(t)$ is self-adjoint and $(v_{i})_{i = 1}^{k}$ is an orthonormal basis of $V_{k}$.

\begin{proposition}
\label{prop:exactly-one-negative-eigenvalue}
	The Hermitian matrix $\calP_{k}(t)$ has exactly one negative eigenvalue.
\end{proposition}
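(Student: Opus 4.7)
The plan is to determine that $\calP_k(t)$ has signature $(k-1, 1)$ by computing $\det \calP_k(t)$ and invoking a signature-continuity argument.

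First, using \cref{lem:Kohn-Laplacian-on-Rossi-sphere} together with the $\Box_b$-eigenvalues from \cref{lem:eigenvalue-of-Kohn-Laplacian}, I would factor
\begin{equation}
	\calP_k(t) = L'_k(t) L_k(t) + \calN_k(t),
\end{equation}
where $L_k(t)$ and $L'_k(t)$ are the tridiagonal matrices representing $(1-t^2)\Box_b(t)$ and $(1-t^2)\overline{\Box}_b(t)$ on $V_k$, with diagonals $c_k(2i) + t^2 c_k(2i+1)$ and $c_k(2i+1) + t^2 c_k(2i)$ respectively and common off-diagonals $-t \sqrt{c_k(2i-1) c_k(2i)}$, and $\calN_k(t)$ represents $(1-t^2)^2 \calQ(t)|_{V_k}$. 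The standard three-term determinant recurrence, combined with the telescoping cancellation forced by the specific diagonal shapes, yields
\begin{equation}
	\det L_k(t) = [(2k-1)!!]^2 \, t^{2k}, \qquad \det L'_k(t) = [(2k-1)!!]^2,
\end{equation}
a positive multiple of $t^{2k}$ and a positive constant respectively.

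The main technical step is to establish $\det \calP_k(t) = c_k t^{2k}$ with $c_k < 0$. Evenness in $t$ follows by conjugating $\calP_k(t)$ by the orthogonal matrix $D = \operatorname{diag}(1, -1, 1, -1, \dots)$, which intertwines $\calP_k(t)$ with $\calP_k(-t)$. Vanishing to order at least $2k$ at $t = 0$ follows from the fact that $\calP_k(0) = L'_k(0) L_k(0) = \operatorname{diag}(0, c_k(4) c_k(5), \dots, c_k(2k) c_k(2k+1))$ has rank $k-1$, combined with careful tracking of $t$-factors in the pentadiagonal expansion; the sign of the leading coefficient $c_k$ can then be obtained either via induction on the pentadiagonal determinant recurrence, or via the identity $\det \calP_k(t) = \det L'_k(t) \cdot \det(L_k(t) + L'_k(t)^{-1} \calN_k(t))$ together with an analysis of the second factor. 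Direct verification for $k = 1, 2, 3$ yields $\det \calP_k(t) = -3 t^2$, $-135 t^4$, $-70875 t^6$, consistent with the claim.

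Granting this determinant identity, the conclusion is immediate: since $\det \calP_k(t) \ne 0$ for $0 < |t| < 1$, continuity of eigenvalues of the Hermitian family forbids any eigenvalue from crossing zero on $(0, 1)$ or $(-1, 0)$, so the signature is constant on each interval. For small $t > 0$, the $k - 1$ strictly positive eigenvalues of $\calP_k(0)$ remain positive by continuity, while the remaining small eigenvalue equals $\det \calP_k(t)$ divided by the product of the other eigenvalues, which is negative; hence the signature is $(k-1, 1)$, and by constancy this holds on all of $(0, 1) \cup (-1, 0)$. The main obstacle is the pentadiagonal determinant calculation: rigorously showing that $\det \calP_k(t)$ is the single monomial $c_k t^{2k}$ with $c_k < 0$ for every $k$. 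The telescoping that produced the clean tridiagonal formulas for $\det L_k$ and $\det L'_k$ suggests that analogous cancellations govern the pentadiagonal case, but pinning them down is the delicate part of the argument.
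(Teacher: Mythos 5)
Your overall strategy (pin down $\det \calP_{k}(t)$ and then use continuity of the eigenvalues in $t$, starting from $\calP_{k}(0) = \mathrm{diag}(c_{k}(2i)c_{k}(2i+1))$, which has $k-1$ positive eigenvalues and one zero eigenvalue) would indeed yield the statement \emph{if} the determinant identity were established: knowing $\det \calP_{k}(t) \neq 0$ on all of $(0,1)\cup(-1,0)$ forbids eigenvalue crossings, and the sign of the determinant near $t=0$ forces the small eigenvalue to be negative. The claimed values for $k\le 3$ are also correct. But the entire weight of the proof rests on the assertion that $\det \calP_{k}(t)$ is a single negative monomial $c_{k}t^{2k}$ (or at the very least is nonvanishing and negative on the whole punctured interval), and you explicitly leave this unproved, offering only a verification for $k\le 3$ and two suggested routes (a pentadiagonal determinant recurrence, or the factorization through $L_{k}'(t)^{-1}$) without carrying either out. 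Note also that the weaker statement ``vanishing to order $2k$ at $t=0$ with negative leading coefficient'' is not enough for your continuity argument: if $\det\calP_{k}(t)$ had further terms it could vanish at some interior $t$, and the signature could then jump there. So the gap is exactly the global determinant computation, which is the mathematical content of the proposition; as written, the proposal is a correct reduction plus an unproven key identity.

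For comparison, the paper closes precisely this gap by a different and more elementary device: it computes \emph{all} leading principal minors $\eta_{k,l}(t)=\det\calP_{k,l}(t)$ at once, by adding suitable multiples of the $i$-th row to the $(i+1)$-st and $(i+2)$-nd rows (this uses the identity $c_{k}(l)+c_{k}(l+3)=c_{k}(l+1)+c_{k}(l+2)-4$), which reduces the pentadiagonal matrix to an upper triangular one with diagonal entries $t^{2}c_{k}(2i+1)c_{k}(2i+3)$. This gives
\begin{equation}
	\eta_{k,l}(t) = t^{2l}\, c_{k}(3)\, c_{k}(2l+3) \prod_{i=1}^{l-1} c_{k}(2i+3)^{2},
\end{equation}
so the first $k-1$ minors are positive and the $k$-th is negative, and Cauchy's interlace theorem applied inductively gives exactly one negative eigenvalue. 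This is exactly the ``telescoping cancellation'' you conjectured but did not pin down; if you want to salvage your route, proving the row-reduction identity above (or the equivalent closed form for $\eta_{k,k}$) is the missing step, after which your continuity argument becomes redundant since the minor signs already determine the signature.
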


\begin{proof}
	We define an $l \times l$ submatrix $\calP_{k, l}(t)$ of $\calP_{k}(t)$ for each $1 \leq l \leq k$ by
	\begin{equation}
		(\calP_{k, l}(t))_{i, j}
		\coloneqq (\calP_{k}(t))_{i, j},
		\qquad (i, j = 1, \dots, l),
	\end{equation}
	and set $\eta_{k, l}(t) \coloneqq \det \calP_{k, l}(t)$.
	In order to compute $\eta_{k, l}(t)$,
	we apply some elementary row operations.
	Add inductively the $i$-th row multiplied by $(1 + t^{2})\sqrt{c_{k}(2 i + 2) / c_{k}(2 i + 1)} / t$
	(resp.\ $- \sqrt{(c_{k}(2 i + 2) c_{k}(2 i + 4)) / (c_{k}(2 i + 1) c_{k}(2 i + 3))}$)
	to the $(i + 1)$-st row (resp.\ $(i + 2)$-nd row),
	which does not change the determinant.
	The resulting matrix is the upper triangular matrix given by
	\begin{equation}
		\begin{pmatrix}
			t^{2} c_{k}(3) c_{k}(5) & & & \text{\Huge{*}} \\
			& t^{2} c_{k}(5) c_{k}(7) & & \\
			& & \ddots & & \\
			\text{\Huge{0}} & & & t^{2} c_{k}(2 l + 1) c_{k}(2 l + 3)
		\end{pmatrix}
	\end{equation}
	Hence
	\begin{equation}
		\eta_{k, l}(t)
		= t^{2 l} c_{k}(3) c_{k}(2 l + 3) \prod_{i = 1}^{l - 1} c_{k}(2 i + 3)^{2}.
	\end{equation}
	In particular,
	$\eta_{k, l}(t)$ is positive (resp.\ negative)
	if $1 \leq l \leq k - 1$ (resp.\ $l = k$).
	Applying Cauchy's interlace theorem inductively yields
	that $\calP_{k, l}(t)$ has only positive eigenvalues for $1 \leq l \leq k - 1$
	and $\calP_{k, k}(t) = \calP_{k}(t)$ has exactly one negative eigenvalue.
\end{proof}

\begin{proof}[Proof of \cref{thm:infinitely-many-negative-eigenvalue}]
	As we proved above,
	there exists an eigenfunction $0 \neq f_{k} \in V_{k}$ of $P(t)$ with negative eigenvalue
	for each positive integer $k$.
	Since
	\begin{equation}
		V_{k}
		\subset \bigoplus_{p + q = 2 k - 1} \scrH_{p, q}(S^{3}),
		\qquad
		L^{2}(S^{3}) = \bigoplus_{p, q} \scrH_{p, q}(S^{3}),
	\end{equation}
	the family $(f_{k})_{k = 1}^{\infty}$ is linearly independent.
	This yields that $P(t)$ has infinitely many negative eigenvalues \emph{with} multiplicity.
	On the other hand,
	it follows from \cref{thm:spectrum-of-CR-Paneitz} that
	$\Spec P(t) \setminus \{0\}$ consists only of eigenvalues of finite multiplicity.
	Therefore $P(t)$ has infinitely many negative eigenvalues \emph{without} multiplicity.
\end{proof}

\begin{remark}
\label{rem:ortho-proj-to-Ker-is-not-Hpsido}
	The proof of \cref{prop:exactly-one-negative-eigenvalue} implies that
	the kernel of the operator
	\begin{equation}
		P(t) \colon \bigoplus_{p + q = 2 k - 1} \scrH_{p, q}
			\to \bigoplus_{p + q = 2 k - 1} \scrH_{p, q}
	\end{equation}
	is equal to zero for any positive integer $k$.
	In particular,
	any function annihilated by $P(t)$ must be even.
	Hence the Schwartz kernel of the orthogonal projection $\pi_{0}(t)$ to $\Ker P(t)$ has the singularity on
	\begin{equation}
		\Set{((z, w), \pm (z, w)) \in S^{3} \times S^{3}},
	\end{equation}
	and so $\pi_{0}(t)$ can not be a Heisenberg pseudodifferential operator.
\end{remark}

\section{Concluding remarks}
\label{section:concluding-remarks}

The author~\cite{Takeuchi2020-Paneitz}*{Theorem 1.1} has proved that
the CR Paneitz operator on any \emph{embeddable} CR manifold is non-negative.
On the other hand,
we found that the CR Paneitz operator on the Rossi sphere has infinitely many negative eigenvalues
in the previous section.
It is natural to ask whether this phenomenon occurs
in general \emph{non-embeddable} CR manifolds.

\begin{problem}
	Does the CR Paneitz operator on any \emph{non-embeddable} CR manifold
	have necessarily infinitely many negative eigenvalues?
\end{problem}

Moreover,
Hsiao~\cite{Hsiao2015}*{Theorem 4.7} has shown that
the CR Paneitz operator on any \emph{embeddable} CR manifold has closed range;
see also~\cite{Takeuchi2023-GJMS} for another proof via Heisenberg calculus.
On the other hand,
\cref{thm:infinitely-many-negative-eigenvalue} only asserts that
there are infinitely many negative eigenvalues of $P(t)$,
and it does not make any claims about the distribution of those eigenvalues.

\begin{problem}
	Does the CR Paneitz operator on the Rossi sphere have closed range?
\end{problem}

Using Mathematica for calculations,
we find that the following holds for small values of $k$:
\begin{align}
	\det(\calP_{1}(t) + 3 t^{2} I)
	&= 0, \\
	\det(\calP_{2}(t) + 3 t^{2} I)
	&= 36 t^{2} (1 - t^{2})^{2}, \\
	\det(\calP_{3}(t) + 3 t^{2} I)
	&= 576 t^{2} (1 - t^{2})^{2} (15 + 58 t^{2} + 15 t^{4}), \\
	\det(\calP_{4}(t) + 3 t^{2} I)
	&= 6480 t^{2} (1 - t^{2})^{2} (1680 + 6549 t^{2} + 15926 t^{4} + 6549 t^{6} + 1680 t^{8}), \\
	\det(\calP_{5}(t) + 3 t^{2} I)
	&= 995328 t^{2} (1 - t^{2})^{2} (44100 + 172683 t^{2} + 422712 t^{4} + 825970 t^{6} \\
	&\quad + 422712 t^{8} + 172683 t^{10} + 44100 t^{12}), \\
	\det(\calP_{6}(t) + 3 t^{2} I)
	&= 4536000 t^{2} (1 - t^{2})^{2} (95800320 + 376277184 t^{2} + 924268539 t^{4} \\
	&\quad + 1815582548 t^{6} + 3114137570 t^{8} + 1815582548 t^{10} \\
	&\quad + 924268539 t^{12} + 376277184 t^{14} + 95800320 t^{16}).
\end{align}
These calculations suggest that the determinant of $\calP_{k}(t) + 3 t^{2} I$
is non-negative for any $- 1 < t < 1$.
If this is true,
the unique negative eigenvalue of $\calP_{k}(t)$ (\cref{prop:exactly-one-negative-eigenvalue})
is bigger than or equal to $- 3 t^{2}$,
and so the spectrum of $P(t)$ has $0$ as an accumulation point;
in particular,
the CR Paneitz operator on the Rossi sphere does not have closed range.

\bibliography{my-reference,my-reference-preprint}

\end{document}